\algrenewcommand{\algorithmiccomment}[1]{\hfill\(\triangleright\) #1} 
\newtheorem{lemma}{Lemma}[section]
\newtheorem{theorem}[lemma]{Theorem}
\newtheorem{corollary}[lemma]{Corollary}
\newtheorem{proposition}[lemma]{Proposition}
\theoremstyle{definition}
\newtheorem{definition}[lemma]{Definition}
\newtheorem{remark}[lemma]{Remark}
\tikzstyle{map}=[->,semithick]
\tikzstyle{arc}=[bend left,->,semithick]
\tikzstyle{rinclusion}=[right hook->,semithick]
\tikzstyle{linclusion}=[left hook->,semithick]
\def\s{{\mathcal S}}
\def\Ri{\mathcal R}
\def\R{{\mathbb R}}
\def\B{{\mathbb B}}
\def\K{{\mathcal K}}
\def\L{{\mathcal L}}
\def\eps{{\varepsilon}}
\def\G{{\mathcal{G}}}
\def\sh{{\mathscr{Sh}}}
\def\sc{{\mathscr{SC}}}
\newcommand{\diam}[2][\M]{\mathrm{diam}_{#1}\left(#2\right)}
\newcommand{\sd}[2][]{\mathrm{sd}^{#1}\left(#2\right)}
\newcommand{\norm}[1]{\left\lVert #1 \right\rVert}
\def\rad{\operatorname{rad}}
\def\diam{\operatorname{diam}}
\def\sd{\operatorname{sd}}
\begin{document}

\title[Euclidean Graph Reconstruction]{Vietoris--Rips Shadow for Euclidean Graph Reconstruction}

\author{Rafal Komendarczyk}
\address{Mathematics Department, Tulane University, USA}
\email{rako@tulane.edu}

\author{Sushovan Majhi}
\address{Data Science Program, George Washington University, USA}
\email{s.majhi@gwu.edu}

\author{Atish Mitra}
\address{Mathematics Department, Montana Technological University, USA}
\email{amitra@mtech.edu}







\begin{abstract}
The shadow of an abstract simplicial complex $\mathcal{K}$ with vertices in $\mathbb{R}^N$ is a subset of $\mathbb{R}^N$ defined as the union of the convex hulls of simplices of $\mathcal{K}$.
The Vietoris--Rips complex of a metric space $(\mathcal S,d)$ at scale $\beta$ is an abstract simplicial complex whose each $k$-simplex corresponds to $(k+1)$ points of $\mathcal S$ within diameter $\beta$.
In case $\mathcal{S}\subset\mathbb R^2$ and $d(a,b)=\|a-b\|$ the standard Euclidean metric, the natural shadow projection of the Vietoris--Rips complex is already proved by Chambers et al. to induce isomorphisms on $\pi_0$ and $\pi_1$.
We extend the result beyond the standard Euclidean distance on $\mathcal{S}\subset\mathbb{R}^N$ to a family of path-based metrics, $d^\varepsilon_{\mathcal S}$.
From the pairwise Euclidean distances of points in $\mathcal S$, we introduce a family (parametrized by $\varepsilon$) of path-based Vietoris--Rips complexes $\mathcal{R}^\varepsilon_\beta(\mathcal S)$ for a scale $\beta>0$. 
If $\mathcal{S}\subset\mathbb{R}^2$ is Hausdorff-close to a planar Euclidean graph $\mathcal G$, we provide quantitative bounds on scales $\beta,\varepsilon$ for the shadow projection map of the Vietoris--Rips complex of $(\mathcal{S},d^\varepsilon_\mathcal{S})$ at scale $\beta$ to induce $\pi_1$-isomorphism.
This paper first studies the homotopy-type recovery of $\mathcal G\subset\mathbb R^N$ using the abstract Vietoris--Rips complex of a Hausdorff-close sample $\mathcal{S}$ under the $d^\varepsilon_\mathcal{S}$ metric. 
Then, our result on the $\pi_1$-isomorphism induced by the shadow projection lends itself to providing also a geometrically close embedding for the reconstruction.
Based on the length of the shortest loop and large-scale distortion of the embedding of $\mathcal G$, we quantify the choice of a suitable sample density $\varepsilon$ and a scale $\beta$ at which the shadow of $\mathcal{R}^\varepsilon_\beta(\mathcal{S})$ is homotopy-equivalent and Hausdorff-close to $\mathcal G$.
\end{abstract}

\keywords{Vietoris--Rips complex, graph reconstruction, geometric graphs, homotopy Equivalence, geometric complex, shadow complex} 


\maketitle




\section{Introduction}
Given a metric space $(\s,d_\mathcal{\s})$ and scale $\beta>0$, the \emph{Vietoris--Rips complex}, denoted $\Ri_\beta(\s)$, is defined as an abstract simplicial complex having a $k$-simplex for every subset of $\s$ with cardinality $k+1$ and diameter less than $\beta$.
In the last decade, Vietoris--Rips complexes have gained considerable attention in the topological data analysis community due to their relatively straightforward computational schemes regardless of the dimension of the data, compared to some of the alternatives such as \v{C}ech and $\alpha$-complexes.
The theoretical understanding of the topology of Vietoris--Rips complexes at different scales is generally extremely elusive.
Nonetheless, the far and wide use of Vietoris--Rips complexes---particularly in the field of shape reconstruction---can be attributed to their ability to \emph{reconstruct} the topology of a hidden shape $\mathcal X$ even when constructed on a noisy sample $\s$ ``around'' $\mathcal X$; \cite{hausmann1995vietoris,attali2011vietoris,kim2020homotopy,majhi2023vietoris,majhi2024demystifying,MajhiStability}. 

There are many real-world applications where the unknown shape $\mathcal{X}$ and the sample $\s$ are hosted in a Euclidean space $\R^N$ within a small Hausdorff proximity (Definition~\ref{def:dH}).
In case $\mathcal{X}$ belongs to a nice enough class of shapes, the Vietoris--Rips complex of $\s$ (possibly under a non-Euclidean metric) has been shown to successfully reconstruct $\mathcal{X}$ up to homotopy-type; pivotal developments include \cite{attali2011vietoris,kim2020homotopy} for compact subsets of positive reach, \cite{majhi2023vietoris} for Euclidean submanifolds, \cite{majhi2023vietoris} for Euclidean graphs, \cite{MajhiStability} for more general geodesic subspaces of curvature bounded above.

This paper is devoted to Euclidean graph reconstruction---both \emph{topologically} and \emph{geometrically}.
Graph structures are ubiquitous in real-world applications.
In practice, Euclidean data or \emph{sample} $\s\subset\R^N$ sometimes approximate an (unknown) graph $\G$ that is realized as a subset of the same Euclidean space $\R^N$ with a controlled Hausdorff distance $d_H(\s,\G)$.  
For compact sets of positive reach or $\mu$-reach, powerful reconstruction results are available~\cite{attali2011vietoris,chazal2006sampling,kim2020homotopy}. However, Euclidean graphs---even smooth ones---generically have vanishing $\mu$-reach at their vertices, placing them outside the scope of those results. 
Our conditions, based on the length of the shortest loop and the large-scale
distortion, provide an alternative framework better suited to the graph setting and more convenient in contexts where $\mu$-reach bounds are unavailable.

\paragraph{Homotopy Reconstruction} Our study of the homotopy reconstruction of Euclidean graphs via Vietoris--Rips complexes of the sample is inspired by the recent developments in the reconstruction of 
graphs~\cite{majhi2023vietoris} and general geodesic subspaces~\cite{MajhiStability}, using a non-Euclidean, path-based metric for the output Vietoris--Rips complexes. 
The sample $\s$ comes equipped with the Euclidean distance between pairs of points. 
Even when such a sample exhibits a sufficiently small Hausdorff--closeness to $\G$, the \emph{Euclidean}  Vietoris--Rips complex generally fails to be homotopy equivalent to the underlying graph. 
Near the vertices of $\G$, the presence of small redundant cycles in the Euclidean Vietoris--Rips complex of $\s$ is often unavoidable; see Figure~\ref{fig:5-prong-shadow-MA}.

For this reason, the Euclidean metric on $\s$ is not deemed an appropriate metric for building the Vietoris--Rips complexes on $\s$ to obtain a topologically faithful reconstruction of the unknown graph. 
Instead, the authors of~\cite{fasy2022reconstruction,majhi2023vietoris,MajhiStability} considered the Vietoris--Rips complexes of the sample under a family of path-based metrics $(\s,d^\eps_\s)$ (defined in Definition~\ref{def:eps-path}) in their reconstruction schemes. 
Under this metric, for sufficiently small scale $\beta$, we show that the Vietoris--Rips complex $\Ri^\eps_\beta(\s)$ is homotopy equivalent to $\G$.

Our homotopy reconstruction extends and improves the above-mentioned works in the following directions. 
Although the authors of~\cite{fasy2022reconstruction,majhi2023vietoris} considered embedded graph reconstruction in a similar setting, a noteworthy limitation was the use of the global distortion of $\G$, which is known to become infinite in the presence of the cusp-like structures in $\G$.
We successfully mitigate the caveat by putting forward the large-scale distortion (Definition~\ref{def:lc-dist}) as a more robust, alternative sampling parameter in our reconstruction scheme. 
In addition, our proof techniques are much simpler than~\cite{majhi2023vietoris}, with reconstruction guarantees under much weaker sampling conditions.
We also mention that the large-scale distortion was introduced in~\cite{MajhiStability} for the reconstruction of spaces more general than Euclidean graphs.
However, we present a more direct proof, which avoids using their two main ingredients---Hausmann's theorem~\cite{hausmann1995vietoris} and Jung's theorem~\cite{lang1997jung}---resulting in a much weaker sampling condition in the special case of graphs.

Based on the length $\ell(\G)$ of the shortest loop and large-scale distortion $\delta_\beta^\eps(\G)$ of the embedding of $\G$, we show how to choose a suitable density parameter $\varepsilon$ and a scale $\beta$ such that $\mathcal{R}^\varepsilon_\beta(\s)$ is homotopy-equivalent to $\G$. 
\begin{restatable*}[Homotopy Reconstruction]{theorem}{homeq}\label{thm:hom-eq}
Let $\G \subset \mathbb{R}^N$ be a compact, connected metric graph.  
Fix any $\xi\in\left(0,\frac{1}{4}\right)$.
For any positive $\beta<\frac{\ell(\G)}{4}$, choose\footnote{It is possible to choose such small $\eps$, as $\delta^\eps_\beta(\G)\to1$ as $\eps\to0$.} 
a positive $\eps\leq\frac{\beta}{3}$ such that $\delta^{\eps}_{\beta}(\G)\leq\frac{1+2\xi}{1+\xi}$. 
If $\s\subset \R^N$ is such that $d_H(\G,\s)<\tfrac{1}{2}\xi\eps$, then we have a homotopy equivalence $\Ri^\eps_\beta(\s)\simeq \G$.
\end{restatable*}

\paragraph{Geometric Reconstruction} Topologically faithful reconstructions are only useful to estimate the homological features---such as the Betti numbers, Euler characteristic, etc---of the hidden shape $\mathcal X$.
A more challenging yet more useful paradigm is \emph{geometric reconstruction}: 
to output a subset $\widetilde{\mathcal X}$ in the same host Euclidean space $\R^N$ computed from $\s$ such that $\widetilde{\mathcal X}$ is not only homotopy equivalent but also Hausdorff-close to $\mathcal X$.


Despite aiding in homotopy equivalent reconstruction, as an abstract simplicial complex, Vietoris--Rips complexes fail to provide an embedding in the same host Euclidean space.
For a geometric reconstruction of Euclidean shapes, it's most natural to consider the shadow of the Vietoris--Rips complexes.
The \emph{shadow} of an abstract simplicial complex $\K$ with vertices in $\R^N$ is a subset of $\R^N$ defined as the union of the convex hulls of simplices of $\K$; see Definition~\ref{def:shadow} for more details.

The shadow of a general simplicial complex with Euclidean vertices is notorious for being topologically unfaithful.
However, when considering the Vietoris--Rips complex of a finite points in $\R^2$ under the Euclidean metric, the shadow project map has been shown in \cite{Chambers2010} to induce isomorphisms on both $\pi_0$ and $\pi_1$.
Furthermore, the authors show that the projection map fails to induce surjection on $\pi_1$ for any $N\geq4$ and fails to induce an injective homomorphism on $\pi_k$ for any $N\geq2$ and $k\geq2$.
The curious case of $N=3$ was later partially resolved in \cite{adamaszek_homotopy_2017} by proving that the shadow projection induces a surjection on $\pi_1$.

In this paper, we consider the Vietoris--Rips complexes of a sample $\s\subset\R^2$, constructed under a (possibly non-Euclidean) family (parametrized by $\eps$) of path-based metrics $d^\eps_\s$ on $\s$.
The phenomenal utility of such path-based metrics has recently been demonstrated by the authors of \cite{fasy2022reconstruction,majhi2023vietoris,MajhiStability} in the context of shape reconstruction beyond smooth submanifolds.
If $\s$ is Hausdorff-close to a Euclidean graph $\G$, we provide quantitative bounds on scales $\beta,\eps$ for the shadow projection map of the Vietoris--Rips of $(\s,d^\eps_\s)$ at scale $\beta$ to induce $\pi_1$-isomorphism.
This leads to the following pragmatic geometric reconstruction scheme using the quantity $\Theta$ (defined in \ref{eq:theta}) and the shadow radius $\Delta(\G)$ of $\G$ as introduced in Definition~\ref{def:shadow}.

\begin{restatable*}[Geometric Reconstruction]{theorem}{geom}\label{thm:geom}
Let $\G \subset \mathbb{R}^2$ a graph having properties (A1--A4) as described in Section~\ref{sec:assumptions}.
Fix any $\xi\in\left(0,\frac{1-\Theta}{6}\right)$.
For any positive $\beta<\min\left\{\Delta(\G),\frac{\ell(\G)}{18}\right\}$, choose\footnote{It is possible to choose such small $\eps$, as $\delta^\eps_\beta(\G)\to1$ as $\eps\to0$.} 
a positive $\eps\leq\frac{(1-\Theta)(1-\Theta-6\xi)}{12}\beta$ such that $\delta^{\eps}_{\beta}(\G)\leq\frac{1+2\xi}{1+\xi}$. 
If $\s\subset \R^2$ is such that $d_H(\G,\s)<\tfrac{1}{2}\xi\eps$, then the shadow $\sh(\Ri_\beta^\eps(\s))$ is homotopy equivalent to $\G$. Moreover, $d_H(\sh(\Ri_\beta^\eps(\s)),\G)<\left(\beta+\frac{1}{2}\xi\eps\right)$. 
\end{restatable*}

\subsection{Organization of the Paper}
The paper is organized in the following manner.
We present the basic definitions and results from topology and metric geometry in~Section~\ref{sec:preli}.
Section~\ref{sec:top} presents our result on the homotopy reconstruction of $\G$.
In Section~\ref{sec:shadow}, we define the shadow of a general simplicial complex and study conditions to achieve surjectivity 
of the natural shadow projection on the fundamental group.
Finally, Section~\ref{sec:geom} defines our novel sampling parameter shadow radius in order to provide a Hausdorff-close, homotopy-equivalent geometric reconstruction of $\G$.

\section{Preliminaries}\label{sec:preli}
This section provides important notations and definitions along with basic results from algebraic topology and metric geometry used throughout the paper. 

\subsection{Basic Definitions}
Let $\K$ be an abstract simplicial complex.
We denote by $|\K|$ the geometric realization of $\K$.
\begin{definition}[\v{C}ech Complex]\label{def:cech}
Let $(\mathcal{X},d_\mathcal{X})$ be a metric space and $\mathcal{A}\subset\mathcal{X}$ be a subset.
For scale $\beta >0$, the \emph{\v{C}ech complex}, denoted $\check{\mathcal{C}}_\beta(\mathcal A)$, is defined as the abstract simplicial complex whose simplices are the finite subsets of points of $\mathcal A$ such that the (open) $\beta$-balls about those points have a common intersection in $\mathcal{X}$.
\end{definition}

\begin{definition}[Hausdorff Distance]\label{def:dH}
Let $(\mathcal X,d_\mathcal{X})$ be a metric space. 
Let $\mathcal A$ and $\mathcal B$ be compact, non-empty subsets. 
The \emph{Hausdorff distance} between them, denoted $d^\mathcal{X}_H(\mathcal A, \mathcal B)$, is defined as
\[
d^\mathcal{X}_H(\mathcal A, \mathcal B) \coloneqq 
\max\left\{\sup_{a\in\mathcal A}\inf_{b\in \mathcal B}d_\mathcal{X}(a,b),\sup_{b\in \mathcal B}\inf_{a\in\mathcal A}d_\mathcal{X}(a,b)\right\}.
\]
In case $\mathcal X\subset\mathbb{R}^N$ and $\mathcal A,\mathcal B,\mathcal{X}$ are all equipped with the Euclidean metric, we simply write $d_H(\mathcal A,\mathcal B)$.
\end{definition}

\subsection{Embedded Metric Graphs}\label{sec:metric-graphs}
Let $\G\subset\R^N$ be an embedded metric graph, i.e., a metric graph \cite{majhi2023vietoris} such that the intrinsic metric coincides with the length metric induced from the Euclidean subspace metric. 
$\G$ comes equipped with the standard Euclidean metric $\|\bullet-\bullet\|$.
Using the notion of the length $L(\gamma)$ of a continuous path $\gamma$ in $\R^N$, we can define the intrinsic or shortest path metric $d_\G$ on $\G$ as follows:
For any two points $a,b\in \G$, it is the infimum of the lengths of paths $[0,1]\to\G$ joining $a,b$. 
In this metric, the diameter of a bounded subset $\mathcal A\subset\G$ is denoted by $\diam_\G(\mathcal A)$.
If $\G$ is path-connected, $d_\G$ defines a metric on $\G$, called the \emph{length} metric. 
Using the above definition, we are also allowing $\G$ to have single-edge cycles and multiple edges between a pair of vertices.
We denote by $\ell(\G)$ the length of the smallest simple cycle (systole) of $\G$. In case $\G$ is contractible, we use the convention that $\ell(\G)=\infty$. 
We also denote by $\mathscr{V}(\G)$ and $\mathscr{E}(\G)$ the vertices and edges of $\G$, respectively.
\begin{remark}
We assume throughout the paper that $\G$ is compact with $\mathscr{E}(\G)<\infty$.
As a result, $\ell(\G)>0$.    
\end{remark}

\subsection{Hausmann-Type Theorem for Metric Graphs}\label{subsec:circumcenter} We begin with the definition of circumradius and circumcenter in $(\G,d_\G)$, with respect to the length metric. 
For a bounded subset $\mathcal A\subset \G$, we define its \emph{circumradius} to be the radius of the smallest (closed) $d_\G$-metric ball enclosing $\mathcal A$. More formally,
\[
\rad(\mathcal A)\coloneqq\inf_{g\in \G}\ \sup_{a\in\mathcal A}d_\G(a, g).
\]
A point $g\in \G$ satisfying $\max_{a\in\mathcal A}d_\G(a ,g)=\rad(\mathcal A)$ is called a \emph{circumcenter} of $\mathcal A$, and is denoted by $c(\mathcal A)$. 
The reader must be warned that these terms may not always match their usual meaning in plane geometry.

In light of the above definition, we present the following important proposition.
\begin{proposition}[Circumcenter Existence]\label{prop:center} Let
$\mathcal A\subset(\G,d_\G)$ be a (non-empty) compact subset with $\diam_\G(\mathcal A)<\ell(\G)/3$. 
Then, the circumcenter of $\mathcal A$ exists uniquely. 
Moreover, the circumradius is $\frac{1}{2}\diam_\G(\mathcal A)$.
\end{proposition}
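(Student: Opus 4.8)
The plan is to exploit the hypothesis $\diam_\G(\mathcal A) < \ell(\G)/3$ to reduce the whole problem to a statement about a single geodesic segment in $\G$, where ``circumcenter'' and ``circumradius'' behave exactly as they do on an interval of $\R$. First I would invoke compactness of $\mathcal A$ to extract a pair $a, b \in \mathcal A$ realizing $d_\G(a,b) = \diam_\G(\mathcal A) =: D < \ell(\G)/3$; this uses that $d_\G$ is a genuine metric on the compact connected graph $\G$, so the diameter is attained. Then I would argue that there is a \emph{unique} geodesic $\gamma$ joining $a$ and $b$, and moreover that $\mathcal A \subseteq \gamma$. For uniqueness: two distinct geodesics of length $D$ between $a$ and $b$ would concatenate into a closed curve of length $2D < \tfrac{2}{3}\ell(\G) < \ell(\G)$; a closed curve of positive length in a graph of systole $\ell(\G)$ that is shorter than $\ell(\G)$ must be null-homotopic and in fact backtracks, which forces the two geodesics to coincide. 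For containment: if some $x \in \mathcal A$ lay off $\gamma$, then the three points $a, b, x$ would pairwise be at distance $\le D$, and a short computation with the geodesics $[a,x]$, $[x,b]$, $[a,b]$ produces a non-backtracking closed loop of length $< \ell(\G)$ unless $x \in \gamma$; I would need to be a little careful here, but the systole bound $D < \ell(\G)/3$ is exactly tuned so that $\ell([a,x]) + \ell([x,b]) < \tfrac{2}{3}\ell(\G)$ and hence the loop through $a$, $x$, $b$, back along $\gamma$ has length $< \ell(\G)$.

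Once $\mathcal A \subseteq \gamma$ and $\gamma$ is an isometric copy of $[0, D]$ (with $a \mapsto 0$, $b \mapsto D$), the rest is essentially one-dimensional. Identify $\mathcal A$ with a compact subset of $[0,D]$ containing both endpoints $0$ and $D$. I would then show the midpoint $m$ of $\gamma$ (the point at parameter $D/2$) is a circumcenter with $\max_{x \in \mathcal A} d_\G(x, m) = D/2$: indeed for any $x \in \mathcal A \subseteq \gamma$ we have $d_\G(x,m) = |x - D/2| \le D/2$ since $0 \le x \le D$, and equality holds at $x = 0$ or $x = D$. So $\rad(\mathcal A) \le D/2$. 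For the reverse inequality and uniqueness I would take an arbitrary $g \in \G$ and bound $\max_{x \in \mathcal A} d_\G(x, g) \ge \tfrac12\big(d_\G(a,g) + d_\G(b,g)\big) \ge \tfrac12 d_\G(a,b) = D/2$ by the triangle inequality, with equality forcing $g$ to lie on a geodesic from $a$ to $b$ — which by uniqueness is $\gamma$ — and at distance exactly $D/2$ from each endpoint, i.e.\ $g = m$. That pins down $\rad(\mathcal A) = D/2$ and $c(\mathcal A) = m$ uniquely.

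The step I expect to be the genuine obstacle is the ``$\mathcal A \subseteq \gamma$'' claim, i.e.\ ruling out a point of $\mathcal A$ sitting off the realizing geodesic. The paragraph preceding the proposition already asserts this for a diameter bound of $\ell(\G)/3$, so I would either cite that directly or reconstruct it via the closed-loop-versus-systole argument sketched above; the constant $1/3$ rather than, say, $1/2$ is precisely what makes the two-segment detour $a \to x \to b$ plus the return along $\gamma$ add up to less than one systole. Everything after that — existence of the diameter-realizing pair, uniqueness of $\gamma$, the midpoint computation — is routine once this structural fact is in hand, so the write-up will spend most of its words making the systole inequality airtight and comparatively little on the one-dimensional endgame.
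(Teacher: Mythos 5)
Your overall strategy mirrors the paper's own sketch, and your endgame is correct: the bound $\sup_{x\in\mathcal A} d_\G(x,g)\ge\max\{d_\G(a,g),d_\G(b,g)\}\ge\tfrac12\bigl(d_\G(a,g)+d_\G(b,g)\bigr)\ge D/2$, with equality only at the midpoint of the (unique) geodesic $\gamma$, is exactly what pins down existence, uniqueness, and the value $\tfrac12\diam_\G(\mathcal A)$. But the structural claim you single out as the crux --- that $\mathcal A\subseteq\gamma$ --- is false, and no amount of care with the systole will rescue it. Take $\G$ to be a tripod: three edges of length $D/2$ meeting at a vertex $v$, with $\mathcal A=\{a,b,x\}$ the three leaves. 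Then $\diam_\G(\mathcal A)=D<\infty=\ell(\G)/3$, the geodesic $\gamma$ from $a$ to $b$ passes through $v$, and $x\notin\gamma$. Your proposed contradiction fails precisely here: the loop $a\to x\to b\to a$ backtracks along the third leg (out to $x$ and straight back to $v$), so it is null-homotopic and puts no constraint on the systole, yet $x$ does not lie on $\gamma$. (The paragraph preceding the proposition in the paper asserts the same containment, so you are in good company, but it is an overstatement there as well.)

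The repair is to weaken the claim to what is actually needed: the midpoint $m$ of $\gamma$ satisfies $d_\G(m,x)\le D/2$ for every $x\in\mathcal A$, even when $x\notin\gamma$. Fix $x$ and consider the union of the three geodesics $[a,b]$, $[a,x]$, $[b,x]$; its total length is at most $3D<\ell(\G)$, so it contains no simple cycle of $\G$ and is therefore a topological tree --- a (possibly degenerate) tripod with branch point $v$ and leg lengths $\alpha=d_\G(a,v)$, $\beta=d_\G(b,v)$, $\chi=d_\G(x,v)$ satisfying $\alpha+\beta=D$, $\alpha+\chi\le D$, $\beta+\chi\le D$. The midpoint $m$ then satisfies $d_\G(m,x)=\lvert\alpha-D/2\rvert+\chi\le D/2$ by whichever of the last two inequalities applies, which yields $\rad(\mathcal A)\le D/2$; your lower-bound argument supplies the reverse inequality and uniqueness. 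Note this is the only place the constant $1/3$ is genuinely needed (to force the three-geodesic union to be a tree); uniqueness of $\gamma$ alone would only require $2D<\ell(\G)$.
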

\begin{proof}
Since $\mathcal A\subset\G$ is compact, there exist $a,b\in\mathcal A$ such that $\diam_\G(\mathcal A)=d_\G(a,b)$.
Let $\gamma\subset\G$ be the unique geodesic joining $a,b$ and $c$ their midpoint.
Since $d_\G(a,b)<\ell(\G)/3$, the midpoint of $\gamma$ is the unique circumcenter of $\mathcal A$, and that the ball around $c$ of radius $\frac{1}{2}d_\G(a,b)$ contains $\mathcal A$ entirely. 

Indeed, if that were not the case, there would be a point $a'\in\mathcal A$ outside of the ball with distances to both $a$ and $b$ at most $\diam_\G(\mathcal A)<\ell(\G)/3$.
Since $\G$ is a graph, this is possible only when concatenating $\gamma$ with the geodesics joining $c,a$ and $c,b$ forms a cycle in $\G$.
Since the length of $\gamma$ is also less than $\ell(\G)/3$, this would contradict the very definition of $\ell(\G)$.
\end{proof}

A proof of the following crucial proposition is presented in the appendix.
\begin{proposition}[Circumcenter Distances]\label{prop:radius} If $\mathcal A$ is a (non-empty) compact subset of $(\G, d_\G)$ with $\diam_\G(\mathcal A)<\ell(\G)/3$, then for any non-empty subset $\mathcal A'\subset \mathcal A$, we have
\[d_\G\left(c(\mathcal A'),c(\mathcal A)\right) \leq\tfrac{1}{2}\diam_\G(\mathcal A).\]
\end{proposition}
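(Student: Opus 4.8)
The plan is to exploit the structure established in the paragraph preceding Proposition~\ref{prop:center}. Since $\diam_\G(\mathcal A)<\ell(\G)/3$, there are points $a,b\in\mathcal A$ realizing the diameter, the unique geodesic $\gamma\subset\G$ joining $a$ and $b$ contains all of $\mathcal A$, and $c(\mathcal A)$ is the midpoint of $\gamma$ with $\rad(\mathcal A)=\tfrac12 d_\G(a,b)$. The key observation is that $\mathcal A'\subset\mathcal A\subset\gamma$, so the entire problem collapses onto the one-dimensional arc $\gamma$.

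First I would parametrize $\gamma$ by arc length as $\gamma\colon[0,D]\to\G$ with $D=d_\G(a,b)=\diam_\G(\mathcal A)$, and record that on $\gamma$ the intrinsic distance agrees with the difference of parameters, i.e. $d_\G(\gamma(u),\gamma(v))=|u-v|$ (any sub-arc of a geodesic is itself a geodesic, so no shortcut exists). In this parametrization $c(\mathcal A)=\gamma(D/2)$. Next, I would apply Proposition~\ref{prop:center} to $\mathcal A'$: since $\diam_\G(\mathcal A')\le\diam_\G(\mathcal A)<\ell(\G)/3$, the circumcenter $c(\mathcal A')$ exists uniquely and is the midpoint of the unique geodesic joining a diameter-realizing pair $a',b'\in\mathcal A'$. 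Because $a',b'\in\gamma$ and $d_\G(a',b')<\ell(\G)/3$, uniqueness of short geodesics in $\G$ forces this geodesic to be the sub-arc of $\gamma$ between $a'$ and $b'$; writing $a'=\gamma(s)$ and $b'=\gamma(t)$ with $0\le s\le t\le D$, we get $c(\mathcal A')=\gamma\!\left(\tfrac{s+t}{2}\right)$.

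Finally, the estimate is a one-line interval computation:
\[
d_\G\bigl(c(\mathcal A'),c(\mathcal A)\bigr)=\left|\frac{s+t}{2}-\frac{D}{2}\right|=\frac12\,|s+t-D|\le\frac{D}{2}=\frac12\diam_\G(\mathcal A),
\]
where the inequality uses $0\le s+t\le 2D$, which follows from $0\le s\le D$ and $0\le t\le D$.

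The only genuine subtlety—hence the step I would treat most carefully—is justifying that $c(\mathcal A')$ actually lies on $\gamma$. This rests entirely on the uniqueness of geodesics of length less than $\ell(\G)/3$ in $\G$ (the fact invoked just before Proposition~\ref{prop:center}): it is this uniqueness that identifies the geodesic whose midpoint defines $c(\mathcal A')$ with the sub-arc of $\gamma$ from $a'$ to $b'$. Once that identification is in place, everything reduces to bookkeeping of arc-length positions on $[0,D]$ and is routine.
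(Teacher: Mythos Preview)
Your proof is correct and rests on the same core observation as the paper: since $\diam_\G(\mathcal A)<\ell(\G)/3$, everything lives on a single geodesic arc, and $c(\mathcal A')$ is the midpoint of a diameter-realizing pair in $\mathcal A'$. The paper packages the final step slightly differently: rather than parametrizing $\gamma$ and doing the explicit interval calculation $|(s+t)/2-D/2|\le D/2$, it observes that the closed ball $\overline{\B_\G(c(\mathcal A),r)}$ with $r=\tfrac12\diam_\G(\mathcal A)$ is geodesically convex (because $r<\ell(\G)/3$), contains $\mathcal A'$, and therefore contains the midpoint of any geodesic between two of its points---in particular $c(\mathcal A')$. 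Your route is a touch more elementary, since it avoids invoking convexity of metric balls and replaces it with a one-line arithmetic bound; the paper's route is a touch more conceptual and would generalize more readily to situations where the containing set is a convex ball rather than literally an arc. Both are perfectly adequate here.

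One minor point shared by both arguments: the statement allows an arbitrary non-empty $\mathcal A'\subset\mathcal A$, not necessarily compact, so strictly speaking a diameter-realizing pair in $\mathcal A'$ need not exist. This is harmless---on the arc $\gamma\cong[0,D]$ one can simply use $s=\inf$ and $t=\sup$ of the parameter values of $\mathcal A'$---but you might mention it since you flagged the existence of $c(\mathcal A')$ as the delicate step.
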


As a consequence of Proposition~\ref{prop:center}, we get the following improved result for graphs in the style of Hausmann's theorem~\cite{hausmann1995vietoris}.
Throughout the paper, $\check{\mathcal C}^L_\beta(\G)$ and $\Ri^L_\beta(\G)$ are used to denote the \v{C}ech and Vietoris--Rips complex of $\G$ with respect to the length metric $(\G, d_\G)$, respectively.
Since $\G$ is infinite, they are infinite complexes. 

\begin{theorem}[Hausmann-Type Theorem for Metric Graphs]\label{thm:hausmann-graph}
For any compact metric graph $(\G,d_\G)$ and $0<\beta<\ell(\G)/3$, we have $\Ri^L_\beta(\G)\simeq\G$.

Furthermore, if $0<\beta\leq\beta'<\ell(\G)/3$, then the natural inclusion $\Ri^L_\beta(\G)\xhookrightarrow{\iota}\Ri^L_{\beta'}(\G)$ is a homotopy equivalence. 
\end{theorem}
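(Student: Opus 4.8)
The plan is to build an explicit ``crushing'' map $\Phi\colon|\Ri^L_\beta(\G)|\to\G$ in the spirit of Hausmann's theorem and to verify it is a homotopy equivalence. Since $\beta<\ell(\G)/3$, every simplex $\sigma$ of $\Ri^L_\beta(\G)$ has $\diam_\G(V(\sigma))<\ell(\G)/3$, so Propositions~\ref{prop:center} and~\ref{prop:radius} apply to all of its vertex subsets. In particular, as recorded in the discussion preceding Proposition~\ref{prop:center}, the vertex set $V(\sigma)$ lies on a single geodesic arc $\gamma_\sigma\subset\G$ of length $\diam_\G(V(\sigma))$; identifying $\gamma_\sigma$ isometrically with a real interval lets one send a formal point $\sum_i t_i x_i\in|\sigma|$ to the genuine barycenter (with the same weights) of the points $x_i\in\gamma_\sigma$, a point of $\G$ that is independent of the chosen orientation of $\gamma_\sigma$. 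For a face $\tau\subseteq\sigma$ one has $\gamma_\tau\subseteq\gamma_\sigma$, so these ``averaging'' maps agree on common faces and assemble into a continuous map $\Phi$ with $\Phi|_{\text{vertices}}=\operatorname{id}_\G$. (Equivalently, one may build such a $\Phi$ inductively over the barycentric subdivision, sending the barycenter of $\sigma$ to the circumcenter $c(V(\sigma))$ and filling in skeleton by skeleton; here Proposition~\ref{prop:radius} guarantees that the already-defined boundary of each barycentric simplex lands in one short arc of $\G$, hence in a contractible set, so the extension exists.)

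Next I would produce an approximate inverse $j\colon\G\to|\Ri^L_\beta(\G)|$: subdivide $\G$ so that every edge has length $<\beta$; each closed edge of the subdivision is then a $1$-simplex of $\Ri^L_\beta(\G)$, so the subdivided $1$-complex includes as a subcomplex, and we let $j$ be that inclusion of geometric realizations. Then $\Phi\circ j$ fixes the subdivision vertices and, on each subdivision edge, is a monotone reparametrization of a geodesic arc joining that edge's endpoints; since any loop of length $<\ell(\G)$ is null-homotopic in $\G$ (its image is acyclic, else it would contain a simple cycle shorter than the systole), this is homotopic rel endpoints to the inclusion of the subdivision edge, and the resulting edgewise homotopies (constant on vertices) glue to $\Phi\circ j\simeq\operatorname{id}_\G$. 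It then remains to see that $\Phi$ — equivalently $j$ — is a homotopy equivalence, which I expect to be the main obstacle. One route is a Hausmann-style collapse: show $|\Ri^L_\beta(\G)|$ deformation retracts onto the subcomplex $j(\G)$ by processing simplices in decreasing dimension (after well-ordering the vertex set $\G$), Propositions~\ref{prop:center}--\ref{prop:radius} supplying at each stage the small contractible set in which the retraction is carried out. A cleaner route is to observe that $\Ri^L_\beta(\G)$ \emph{coincides} with the \v{C}ech complex $\check{\mathcal{C}}^L_{\beta/2}(\G)$ of Definition~\ref{def:cech}: the inclusion $\check{\mathcal{C}}^L_{\beta/2}(\G)\subseteq\Ri^L_\beta(\G)$ is the triangle inequality, while $\Ri^L_\beta(\G)\subseteq\check{\mathcal{C}}^L_{\beta/2}(\G)$ holds because, by Proposition~\ref{prop:center}, the circumradius of a set of diameter $<\beta$ is exactly half its diameter, so its circumcenter lies in every open $\tfrac{\beta}{2}$-ball about its points. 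This $\check{\mathcal{C}}^L_{\beta/2}(\G)$ is the nerve of the open cover $\{B_{\beta/2}(x)\}_{x\in\G}$ of $\G$, which is a \emph{good} cover since the systolic bound forces every such ball, and every non-empty intersection of such balls, to be geodesically convex and hence contractible; the nerve lemma then gives $|\Ri^L_\beta(\G)|\simeq\G$, and tracing through the maps identifies $\Phi$ with this equivalence.

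Finally, for the ``furthermore'' I would use naturality in the scale: because $\gamma_\sigma$ depends only on the vertex set of $\sigma$, the crushing maps satisfy $\Phi_{\beta'}\circ|\iota|=\Phi_\beta$ on the nose, where $\iota\colon\Ri^L_\beta(\G)\hookrightarrow\Ri^L_{\beta'}(\G)$. Having shown, by the first part applied at both scales, that $\Phi_\beta$ and $\Phi_{\beta'}$ are homotopy equivalences, the two-out-of-three property for homotopy equivalences forces $|\iota|$ to be one as well. The crux throughout is the homotopy-equivalence step of the first part — the contractibility of the small-ball intersections, or the skeletal collapse — which is exactly where the hypothesis $\beta<\ell(\G)/3$ enters and where one must keep every relevant diameter safely below $\ell(\G)/2$.
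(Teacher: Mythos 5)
Your ``cleaner route'' is exactly the paper's proof: for $\beta<\ell(\G)/3$, Proposition~\ref{prop:center} gives $\Ri^L_\beta(\G)=\check{\mathcal{C}}^L_{\beta/2}(\G)$, and the Nerve lemma applied to the good cover by intrinsic balls of radius $\beta/2<\ell(\G)/4$ yields $\Ri^L_\beta(\G)\simeq\G$; your naturality-plus-two-out-of-three argument for the ``furthermore'' is the intended (if unstated) completion. The Hausmann-style crushing map you sketch first is extra machinery the paper does not need, and as you note it is the incomplete part of your write-up, but the \v{C}ech/nerve argument you fall back on is self-contained and correct.
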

\begin{proof}
For  any $\beta>0$, we have the natural inclusion $\check{\mathcal{C}}^L_{\beta/2}(\G)\hookrightarrow\mathcal{R}^L_{\beta}(\G)$, but in general $\check{\mathcal{C}}^L_{\beta/2}(\G)$ is a proper subcomplex of $\mathcal{R}^L_{\beta}(G)$. 
However, if $0<\beta <\ell(\G)/3$, we can apply proposition \ref{prop:center} to also get $\Ri^L_{\beta}(\G)\hookrightarrow\check{\mathcal{C}}^L_{\beta/2}(\G)$.
So, $\Ri^L_{\beta}(\G)=\check{\mathcal{C}}^L_{\beta/2}(\G)$.
On the other hand, $\check{\mathcal{C}}^L_{\beta/2}(\G)\simeq\G$ due to the Nerve lemma and the fact that the intrinsic balls of radius less than $\ell(\G)/4$ form a good cover.
\end{proof}

\subsection{The $\eps$-path Metric}
Let $\mathcal A\subset\R^N$ be any subset, equipped with the standard Euclidean metric.
The notion of an $\eps$--path was introduced in~\cite{majhi2023vietoris,fasy2022reconstruction} as a family of piecewise Euclidean segments formed by ``hopping over'' the points in $\mathcal A$. 
The family gives rise to yet another alternative (possibly non-Euclidean) metric on $\mathcal A$. For a positive number $\eps$, we first introduce the notion of an $\eps$-path and then the $d^\eps_{\mathcal A}$-metric.

Let $\mathcal A\subset\R^N$ be non-empty and $\eps>0$ a number. 
For $p,q\in\mathcal A$, an \emph{$\eps$--path} of $\mathcal A$ from $p$ to $q$ is a finite sequence $\mathcal{P}=\{a_i\}_{i=0}^{k+1}\subseteq\mathcal A$ such that $a_0=p$, $a_{k+1}=q$, and $\|a_i-a_{i+1}\|<\eps$ for all $i=0,1,\ldots,k$. 
We define the length of the path by $L(\mathcal{P})\coloneqq\sum_{i=0}^k\|a_i-a_{i+1}\|$ and the set of all $\eps$--paths of $\mathcal A$ joining any pair $p,q\in\mathcal A$ by $\mathscr{P}_{\mathcal A}^\eps(p,q)$.
\begin{definition}[The $d^\eps_{\mathcal A}$-metric]\label{def:eps-path}
Let $\mathcal A\subset\R^N$ be non-empty and $\eps>0$ a number. 
The $\eps$--path metric on $\mathcal A$, denoted $d^\eps_{\mathcal A}$, between any $p,q\in\mathcal A$ is defined by 
\begin{equation*}\label{eq:d^eps}
d^\eps_{\mathcal A}(p,q)\coloneqq\inf\left\{L(\mathcal{P})\mid \mathcal{P}\in \mathscr{P}^\eps_{\mathcal A}(p,q)\right\}.
\end{equation*}
\end{definition}
Note that $d^\eps_\mathcal A$ may not be a metric in general. When $d^\varepsilon_\mathcal A$ is finite, however, $(\mathcal A, d^\eps_\mathcal A)$ becomes a metric space.
Under the sampling condition $d_H(\mathcal{G},\mathcal{S})<\tfrac{1}{2}\xi\varepsilon$ with $\mathcal{G}$ path-connected, every pair of points in $\mathcal{S}$ is connected by an $\varepsilon$-path (cf.\ Proposition~\ref{prop:path-connected}), so $d^\varepsilon_{\mathcal{S}}$ is finite and hence a metric throughout the paper.
We use $\diam^\eps_\s(\mathcal A)$ to denote the diameter of a subset $\mathcal A\subset\s\subset\R^N$ in the $d^\eps_\s$ metric.

\begin{proposition}[Comparison of Path Metrics \cite{majhi2023vietoris}]\label{prop:d^esp-d^L-estimate} Let $\G \subset \mathbb{R}^N$ be a graph and $\s \subset \mathbb{R}^N$ such that $d_H(\s, \G) < \frac{1} {2}\xi\varepsilon$ for some $\xi \in (0,1)$ and $\eps > 0$. 
For any $a, b \in \G$ and corresponding $A, B \in \s$ with $||a-A||, ||b-B||<\frac{1}{2}\xi\eps$, we have
\begin{equation}\label{eq:d^eps-d^L-si-eps}
\norm{A-B}\leq d^\eps_\s(A,B)\leq \dfrac{d_\G(a,b) + \xi\varepsilon}{1-\xi}.
\end{equation}
\end{proposition}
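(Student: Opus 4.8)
The plan is to establish the two inequalities in \eqref{eq:d^eps-d^L-si-eps} separately, with the left one being essentially immediate and the right one carrying all the work. For the left inequality $\norm{A-B}\leq d^\eps_\S(A,B)$: any $\eps$-path $\mathcal P = \{a_i\}_{i=0}^{k+1}$ in $\S$ from $A$ to $B$ is a piecewise-linear curve in $\R^N$, so by the triangle inequality in $\R^N$ its length $L(\mathcal P) = \sum_i \norm{a_i - a_{i+1}}$ is at least $\norm{a_0 - a_{k+1}} = \norm{A-B}$. Taking the infimum over all such paths gives the bound.

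For the right inequality the strategy is to \emph{construct} an explicit $\eps$-path in $\S$ from $A$ to $B$ whose length is controlled by $\frac{d_\G(a,b) + \xi\eps}{1-\xi}$, and then use it to bound the infimum $d^\eps_\S(A,B)$ from above. First I would fix a path $\gamma\colon[0,1]\to\G$ from $a$ to $b$ with length $L(\gamma)$ arbitrarily close to $d_\G(a,b)$ (a near-geodesic; it exists since $d_\G$ is the length metric). Next, subdivide $\gamma$ by points $a = g_0, g_1, \dots, g_m = b$ along $\gamma$ so that consecutive points are close in Euclidean distance — specifically so that $\norm{g_j - g_{j+1}}$ is small enough (say $<(1-\xi)\eps - \xi\eps = (1-2\xi)\eps$, or some similar slack depending on how the constants shake out). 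For each $g_j$, use the Hausdorff hypothesis $d_H(\S,\G) < \tfrac12\xi\eps$ to pick a sample point $A_j \in \S$ with $\norm{g_j - A_j} < \tfrac12\xi\eps$, taking $A_0 = A$ and $A_m = B$ (which is legitimate since $\norm{a - A}, \norm{b-B} < \tfrac12\xi\eps$ by hypothesis). The sequence $\{A_j\}$ is then a candidate $\eps$-path: consecutive hops satisfy $\norm{A_j - A_{j+1}} \leq \norm{A_j - g_j} + \norm{g_j - g_{j+1}} + \norm{g_{j+1} - A_{j+1}} < \tfrac12\xi\eps + (1-2\xi)\eps + \tfrac12\xi\eps = (1-\xi)\eps < \eps$, so it is indeed an $\eps$-path in $\S$.

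It remains to bound its length. Summing, $L(\{A_j\}) = \sum_j \norm{A_j - A_{j+1}} \leq \sum_j \left(\norm{g_j - g_{j+1}} + \xi\eps\right) \leq L(\gamma) + m\,\xi\eps$ (using $\norm{A_j - g_j}+\norm{g_{j+1}-A_{j+1}} < \xi\eps$ and $\sum_j \norm{g_j-g_{j+1}} \le L(\gamma)$ since the $g_j$ lie in order along $\gamma$). The main obstacle — and where the $\tfrac{1}{1-\xi}$ factor enters — is controlling the number of subdivision points $m$: a crude bound like $m \approx L(\gamma)/\eps$ would give an additive error proportional to $\xi L(\gamma)$ rather than the clean form stated. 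The trick is to choose the subdivision adaptively so that each segment $\norm{g_j - g_{j+1}}$ is \emph{exactly} (or nearly) some fixed length $\rho < (1-2\xi)\eps$, forcing $m \le L(\gamma)/\rho + 1$; then $m\,\xi\eps \lesssim \frac{\xi\eps}{\rho}L(\gamma) + \xi\eps$, and choosing $\rho$ appropriately close to the threshold makes the multiplicative constant $\frac{\xi\eps}{\rho}$ combine with $1$ to produce $\frac{1}{1-\xi}$ after also absorbing the $+\xi\eps$ term and letting $L(\gamma)\to d_\G(a,b)$. One has to be slightly careful at the last segment (which may be shorter than $\rho$) and with the limiting argument on $\gamma$, but these are routine. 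Since the inequality is cited from \cite{majhi2023vietoris}, I would in practice just invoke it, but the construction above is the content behind it.
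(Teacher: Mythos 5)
The paper does not prove this proposition itself---it is imported verbatim from \cite{majhi2023vietoris}---and your construction is exactly the argument behind it: take a near-geodesic in $\G$, subdivide it into arcs of controlled length, push the subdivision points to nearby sample points, and bound the resulting $\eps$-path. The one place your constants need adjusting is the subdivision threshold: you propose arc pieces of length $\rho<(1-2\xi)\eps$, but the hop estimate $\norm{A_j-A_{j+1}}<\tfrac12\xi\eps+\rho+\tfrac12\xi\eps$ only requires $\rho\leq(1-\xi)\eps$, and it is precisely the choice $\rho=(1-\xi)\eps$ that makes the arithmetic close: then $m\leq L(\gamma)/\rho+1$ gives $L(\{A_j\})\leq L(\gamma)+m\,\xi\eps\leq\frac{L(\gamma)}{1-\xi}+\xi\eps\leq\frac{L(\gamma)+\xi\eps}{1-\xi}$, and letting $L(\gamma)\to d_\G(a,b)$ finishes the proof (with your more conservative threshold the multiplicative factor would come out as $\frac{1-\xi}{1-2\xi}>\frac{1}{1-\xi}$, which does not match the stated bound and fails entirely for $\xi\geq\tfrac12$). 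With that single correction your proposal is complete and correct.
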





\subsection{The Large-Scale Distortion}
Following the definition of~\cite{MajhiStability}, we now define
the large-scale distortion of $\G\subset\R^N$, denoted $\delta^\eps_R(\G)$, parametrized by $\eps>0$ and $R>0$.
\begin{definition}[Large-scale Distortion]\label{def:lc-dist}
For $\eps>0$ and $R>0$, the \emph{large-scale distortion} or \emph{$(\eps,R)$-distortion} $\mathcal \G\subset\R^N$ is defined by
\begin{equation}\label{eq:delta-eps-R}
\delta^\eps_R(\G)\coloneqq \sup_{d_\G(a,b)\geq R} \frac{d_\G(a,b)}{d_{\G^\eps}(a,b)},
\end{equation}
where $\G^\eps$ and $d_{\G^\eps}(a,b)$ denote, respectively, the Euclidean thickening of $\G$ and the length metric thereof induced from the Euclidean metric; see Figure~\ref{fig:large-scale-dist} for illustration. 
\end{definition}

\begin{figure}[htb]
\centering
\includegraphics[width=0.4\linewidth]{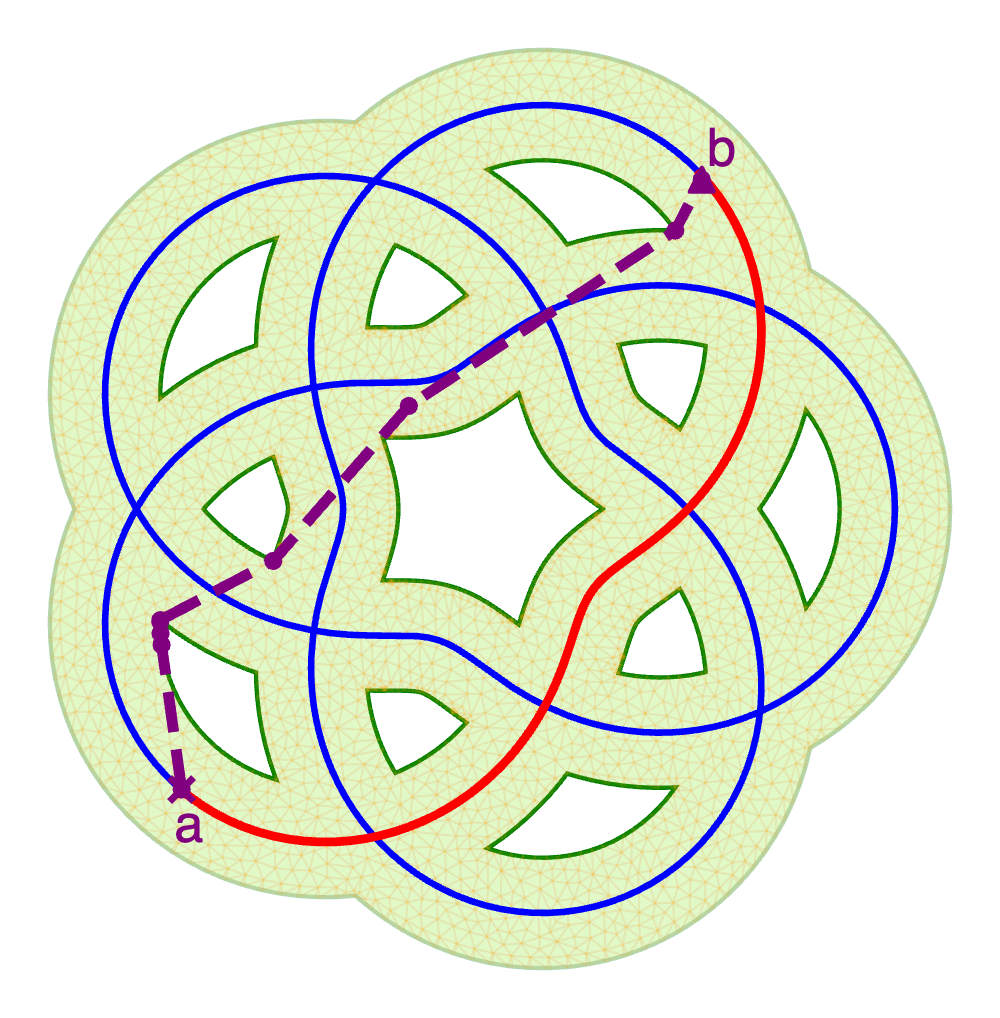}
\caption{A pair of geodesics between points $a,b\in \G$ the red is intrinsic in $\G$ (blue), with length $d_\G(a,b)\geq R$, the purple is intrinsic in $\G^\varepsilon$ (light green tube) with length $d_{\G^\varepsilon}(a,b)$. The large-scale distortion in~\eqref{eq:delta-eps-R} maximizes the ratio of $d_\G(a,b)$ and $d_{\G^\varepsilon}(a,b)$ over all pairs of points in $\G$ with $d_\G(a,b)\geq R$.}
\label{fig:large-scale-dist}
\end{figure}

For more details on the length metric on the thickening and the large-scale distortion, see~\cite{MajhiStability}.
For the purpose of our paper, we only note here that
\begin{enumerate}[(a)]
    \item $\delta^\eps_R(\G)$ is a non-decreasing function of $\eps$ and that $\delta^\eps_R(\G)$ is a non-increasing function of $R$, and
    \item for any $R>0$, the large-scale distortion $\delta^\eps_R(\G)\to1$ as $\eps\to0$.
\end{enumerate}
To conclude the preliminaries, we state the following relevant fact, which has been proved in~\cite{MajhiStability} for general geodesic spaces.
For completeness, a proof is laid out in the appendix.
\begin{proposition}\label{prop:delta_path}
Let $\G\subset\R^N$ and $\s\subset\R^N$ compact with $d_H(\G, \s)<\alpha$. 
Let $\eps>2\alpha$ and $R$ be positive numbers. 
For any $a,b\in \G$ with $d_\G(a,b)\geq R$ and $A,B\in\s$ such that $\max\{\|a-A\|,\|b-B\|\}<\alpha$, we must have
\[
d_\G(a, b)\leq\delta^\eps_R(\G)\left[d^\eps_\s(A, B)+2\alpha\right].
\]
\end{proposition}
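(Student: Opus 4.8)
The plan is to route the estimate through the Euclidean thickening $\G^\eps$. Concretely, I will first prove the purely metric inequality
\[
d_{\G^\eps}(a,b)\ \le\ d^\eps_\S(A,B)+2\alpha,
\]
and then finish by invoking the definition of the large-scale distortion: since $d_\G(a,b)\ge R$, the pair $(a,b)$ lies among those over which the supremum in \eqref{eq:delta-eps-R} is taken, so $d_\G(a,b)\le\delta^\eps_R(\G)\,d_{\G^\eps}(a,b)$ (here $d_{\G^\eps}(a,b)>0$ because $a\neq b$, as $d_\G(a,b)\ge R>0$). Chaining this with the displayed inequality and using $\delta^\eps_R(\G)\ge 0$ gives exactly $d_\G(a,b)\le\delta^\eps_R(\G)\bigl[d^\eps_\S(A,B)+2\alpha\bigr]$. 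We may assume $d^\eps_\S(A,B)<\infty$, since otherwise there is nothing to prove.

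To establish the displayed inequality, fix an arbitrary $\eps$-path $\mathcal{P}=\{s_i\}_{i=0}^{k+1}\subseteq\S$ from $A=s_0$ to $B=s_{k+1}$, so that $\|s_i-s_{i+1}\|<\eps$ for each $i$ and $L(\mathcal{P})=\sum_{i=0}^{k}\|s_i-s_{i+1}\|$. Form the polygonal path $\sigma$ in $\R^N$ by concatenating the straight segments
\[
[a,s_0],\ [s_0,s_1],\ \ldots,\ [s_k,s_{k+1}],\ [s_{k+1},b].
\]
Its Euclidean length is $\|a-A\|+L(\mathcal{P})+\|B-b\|<L(\mathcal{P})+2\alpha$, using the hypotheses $\|a-A\|,\|b-B\|<\alpha$.

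The key point is that $\sigma$ is contained in $\G^\eps$, and this is precisely where the scale hypothesis $\eps>2\alpha$ is used. Any point $x$ of a segment $[s_i,s_{i+1}]$ satisfies $\min\{\|x-s_i\|,\|x-s_{i+1}\|\}\le\tfrac12\|s_i-s_{i+1}\|<\tfrac{\eps}{2}$, so $d(x,\S)<\tfrac{\eps}{2}$ and hence, by $d_H(\S,\G)<\alpha$, $d(x,\G)<\tfrac{\eps}{2}+\alpha<\eps$ since $\alpha<\tfrac{\eps}{2}$. Likewise every point of $[a,s_0]$ is within $\|a-A\|<\alpha<\eps$ of $a\in\G$, and similarly for $[s_{k+1},b]$. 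Thus $\sigma$ is a path inside $\G^\eps$ joining $a$ to $b$, so $d_{\G^\eps}(a,b)\le L(\sigma)<L(\mathcal{P})+2\alpha$. Taking the infimum over all $\eps$-paths $\mathcal{P}\in\mathscr{P}^\eps_\S(A,B)$ yields $d_{\G^\eps}(a,b)\le d^\eps_\S(A,B)+2\alpha$, completing the argument.

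The whole proof is essentially bookkeeping; the only step demanding genuine care is the containment $\sigma\subseteq\G^\eps$, which is the sole place where $\eps>2\alpha$ is needed — without it the "filled-in'' hop segments could escape the thickening and the length comparison inside $\G^\eps$ would fail. One should also dispatch the degenerate cases: $d^\eps_\S(A,B)=\infty$ (handled above) and, if $\G$ is not assumed connected, $d_\G(a,b)=\infty$, in which case $\delta^\eps_R(\G)=\infty$ and the inequality is vacuous.
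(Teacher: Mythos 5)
Your proof is correct and follows essentially the same route as the paper's: concatenate $[a,A]$, an $\eps$-path from $A$ to $B$, and $[B,b]$ to obtain a path in $\G^\eps$ of length at most $d^\eps_\S(A,B)+2\alpha$, then apply the definition of $\delta^\eps_R(\G)$. Your write-up is in fact slightly more careful than the paper's, since you spell out why the hop segments stay inside $\G^\eps$ (the step the paper dismisses as "not difficult to see") and you take an infimum over $\eps$-paths rather than assuming a shortest one is attained.
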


\section{Homotopy Reconstruction via Vietoris--Rips Complexes}\label{sec:top}
We first present our homotopy reconstruction result for graphs using Vietoris--Rips complexes.
Throughout this section, we always assume that $\G\subset\R^N$ is a compact, connected Euclidean graph and $\s\subset\R^N$ a compact sample with a small Hausdorff distance to $\G$.

When endowed with the length metric $d_\G$, we denote the Vietoris--Rips complex of $\G$ by $\Ri_\beta^L(\G)$. 
And, the Vietoris--Rips complex of $(\s,d^\eps_\s)$ is denoted by $\Ri_\beta^\eps(\s)$.

Now, we prove the main homotopy reconstruction result. 
We remark that the technique of the proof follows the steps of the previous works \cite{MajhiStability,majhi2023vietoris,majhi2024demystifying}.

\homeq
\begin{proof} [Proof of Theorem \ref{thm:hom-eq}]
Since $d_H(\G,\s)<\tfrac{1}{2}\xi\eps$, we can define a function $\Phi\colon\G\to\s$ such that $\|g-\Phi(g)\|<\tfrac{1}{2}\xi\eps$ for any $g\in\G$. 
Similarly, we can define a function $\Psi\colon\s\to\G$ such that $\|\Psi(s)-s\|< \tfrac{1}{2}\xi\eps$ for any $s\in \s$. 

We can (linearly) extend these maps to simplicial maps to get the following chain of simplicial maps, which we justify shortly:
\begin{align}\label{eq:rips_chain}
\Ri^L_{(1-\xi)\beta - \xi\eps}(\G) 
\xrightarrow{\quad\Phi\quad}\Ri^{\eps}_{\beta}(\s) \xrightarrow{\quad\Psi\quad}\Ri^L_{4\beta/3}(\G).
\end{align}
To justify the map $\Phi$, take $g_1,g_2\in\G$ with $d_\G(g_1,g_2) < (1-\xi)\beta - \xi\eps$. 
The second inequality in \eqref{eq:d^eps-d^L-si-eps} then implies 
\begin{equation}\label{eq:phi_beta}
d_\s^\eps(\Phi(g_1),\Phi(g_2))\leq \dfrac{d_\G(g_1,g_2) + \xi\varepsilon}{1-\xi} < \frac{[(1-\xi)\beta - \xi\eps]+\xi\eps}{1-\xi}= \beta.
\end{equation}
Consequently, for any simplex $\sigma\in\Ri^L_{(1-\xi)\beta - \xi\eps}(\G)$,
we must have that $\Phi(\sigma)\in\Ri^{\eps}_{\beta}(\s)$.

We argue similarly to justify the map $\Psi$. 
Take $s_1, s_2\in \s$ with $d^\eps_\s(s_1, s_2)<\beta$. 
In case we already have $d_\G(\Psi(s_1),\Psi(s_2)) < 4\beta/3$, there is nothing to show. 
Otherwise, if $d_\G((\Psi(s_1),\Psi(s_2))\geq4\beta/3>\beta$, then we get from Proposition~\ref{prop:delta_path}:
\begin{align}
d_\G(\Psi(s_1), \Psi(s_2)) 
& \leq \delta^{\eps}_{\beta}(\G) [d^\eps_\s(s_1, s_2)+\xi\eps]\nonumber \\
& \leq \frac{1+2\xi}{1+\xi}(d^\eps_\s(s_1, s_2)+\xi\eps),\text{ due to the choice of }\eps\nonumber \\
& < \frac{1+2\xi}{1+\xi}(\beta+\xi\eps)\label{eq:diam-estimate-1}\\
& \leq \frac{(1+2\xi)(3+\xi)}{3(1+\xi)}\beta,\text{ since }\eps\leq\beta/3\nonumber \\ 
&<\frac{4}{3}\beta,\text{ since }0<\xi<1/4\label{eq:diam-estimate-2}.
\end{align}
For any $s_1, s_2\in \s$ with $d^\eps_\s(s_1, s_2)<\beta$, this shows that $d_\G(\Psi(s_1),\Psi(s_2)) <4\beta/3$, This implies $\Psi$ is a well-defined simplicial map as claimed.

\noindent{\bf Contiguity.}\ The composition $(\Psi\circ\Phi)$ is contiguous (see \cite[Ch. 12]{munkres2018elements} for a definition) to the natural inclusion:
\[
\iota:\Ri^L_{(1-\xi)\beta - \xi\eps}(\G)\hookrightarrow\Ri^L_{4\beta/3}(\G).
\]
Indeed, for any $m$-simplex $\sigma_m\coloneqq[g_0, \ldots, g_m]\in \Ri^L_{(1-\xi)\beta - \xi\eps}(\G)$, it suffices to show 
\[
\operatorname{diam}_\G(\iota(\sigma) \cup \Psi \circ \Phi(\sigma)) < 4\beta/3.
\]
For any $0\leq i,j\leq m$, it suffices to establish that $d_\G(g_i, \Psi\circ\Phi(g_j))<4\beta/3$. 
Without any loss of generality, we assume that  $d_\G(g_i,\Psi\circ\Phi(g_j))\geq4\beta/3>\beta$. 
Then, we have from Proposition~\ref{prop:delta_path}
\begin{equation*}\label{eq:contiguity-ineq}
\begin{split}
d_\G(g_i, \Psi\circ\Phi(g_j)) 
&\leq \delta^{\eps}_{\beta}(\G)[d^\eps_\s(\Phi(g_i), \Phi(g_j))+\xi\eps]
\leq \frac{1+2\xi}{1+\xi}\cdot[d^\eps_\s(\Phi(g_i), \Phi(g_j))+\xi\eps]\\
&\leq \frac{1+2\xi}{1+\xi}\left[\frac{d_\G(g_i,g_j)+\xi\eps}{1-\xi}+\xi\eps\right]\\
&<\frac{1+2\xi}{1+\xi}\left[\frac{(1-\xi)\beta-\xi\eps+\xi\eps}{1-\xi}+\xi\eps\right],\text{ as }d_\G(g_i,g_j) < (1-\xi)\beta - \xi\eps\\
&<\frac{1+2\xi}{1+\xi}\left[\beta\xi+\xi\eps\right]\\
& < \frac{4}{3}\beta,\text{ from~(\ref{eq:diam-estimate-2})}.\\
\end{split}
\end{equation*}

\noindent{\bf Injectivity.}\  
Since we have assumed $4\beta/3 < \ell(\G)/3$, Theorem~\ref{thm:hausmann-graph} implies that the natural inclusion $\iota: \Ri^L_{(1-\xi)\beta - \xi\eps}(\G) 
\hookrightarrow\Ri^L_{4\beta/3}(\G)$ is a homotopy equivalence. 
Since the composition $(\Psi\circ\Phi)$ is contiguous to $\iota$, the map $\Phi$ must induce an injective homomorphism on all the homotopy groups.

\noindent{\bf Surjectivity.}\ Since the geometric realizations $|\Ri^L_{(1-\xi)\beta-\xi\eps}(\G)|$ and $|\Ri^\eps_\beta(\s)|$ are path-connected (due to Proposition~\ref{prop:path-connected} and $\beta\geq\eps$), the result holds for $\pi_0$, i.e., for $m=0$.

For $m\geq1$, let us take an abstract simplicial complex $\K$ such that $|\K|$ is a triangulation of the $m$--dimensional sphere $\mathbb S^m$. 
In order to show surjectivity of $\pi_m(\Phi)$ on $\pi_m(\Ri^L_{(1-\xi)\beta-\xi\eps}(\G))$, we start with a simplicial map  $g\colon\K\to\Ri^\eps_\beta(\s)$, which, by Simplicial Approximation Theorem \cite{munkres2018elements}, approximates any given continuous map $\mathbb S^m=|\K|\longrightarrow |\Ri^\eps_{\beta}(\s)|$ up to homotopy. 
We claim that there exists a simplicial map
$\widetilde{g}\colon\sd\K\to\Ri^L_{(1-\xi)\beta-\xi\eps}(\G)$ such that the following
diagram commutes up to homotopy:

\begin{equation}\label{eq:rips_comm_diagram}
\begin{tikzcd}
	{\Ri^L_{(1-\xi)\beta-\xi\varepsilon}(\G)} && {\Ri^\varepsilon_{\beta}(\s)} && {\Ri^L_{4\beta/3}(\G)} \\
	\\
	{\operatorname{sd} \K} && {\K}
	\arrow["g"', from=3-3, to=1-3]
	\arrow["{\widetilde{g}}", from=3-1, to=1-1]
	\arrow["{\sd}"', from=3-3, to=3-1]
	\arrow["\Phi", from=1-1, to=1-3]
	\arrow["\Psi", from=1-3, to=1-5]
	\arrow["{\Phi \circ \widetilde{g}}", from=3-1, to=1-3]
\end{tikzcd}
\end{equation}
where the linear homeomorphism $\sd^{-1}:\sd{\K}\longrightarrow\K$ maps each vertex of $\sd{\K}$ to the corresponding point of $\K$. 
Recall that each vertex of $\sd{\K}$ is the barycenter,
$\widehat{\sigma}_l$, of an $l$--simplex $\sigma_l$ of $\K$. In order to construct the simplicial map $\widetilde{g}$, we need to define it on the vertices of $\sd{\K}$ and prove that it extends to the simplicial map valued in 
$\Ri^L_{(1-\xi)\beta-\xi\varepsilon}(\G)$. 

Let $\sigma_l=[v_0,v_1,\ldots,v_l]$ be an $l$--simplex of $\K$. 
Since $g$ is a simplicial map, we have that the image
$g(\sigma_l)=[g(v_0),g(v_1),\ldots,g(v_l)]$ is a subset of $\s$ with 
$d^\eps_\s(g(v_i), g(v_j))<\beta$ for any  $0\leq i,j\leq l$. 
We note from (\ref{eq:diam-estimate-2}) that $\diam_\G \bigl(\Psi\circ g(\sigma_l)\bigr)<\frac{\ell(\G)}{3}$. 
Proposition~\ref{prop:center} implies that the circumcenter $c((\Psi\circ g)(\sigma_l))$ exists, and we define the value of $\widetilde{g}$ at the barycenter $\widehat{\sigma}_l$.
\[
\widetilde{g}(\widehat{\sigma}_l)\coloneqq c((\Psi\circ g)(\sigma_l)).
\] 
For any $0\leq j\leq l$, it also follows from Proposition~\ref{prop:radius} that given $\sigma_j\prec\sigma_l$:
\begin{equation}\label{eq:sigma-1}
d^L_\G\bigl(c(\Psi\circ g(\sigma_j)), c(\Psi\circ g(\sigma_l))\bigr)\leq\tfrac{1}{2}\diam^L_\G{(\Psi\circ g)(\sigma_l)}.
\end{equation} 
Now, to see that $\widetilde{g}$ extends to a simplicial map, consider a typical $l$--simplex $\tau_l=[\widehat{\sigma}_0$, $\widehat{\sigma}_1$, $\ldots$, $\widehat{\sigma}_l]$ of $\sd\K$, where $\sigma_{i-1}\prec\sigma_i$ for $1\leq i\leq l$ and $\sigma_i\in\K$, we have
\begin{align*}
\diam_\G\bigl(\widetilde{g}(\tau_l)\bigr)
&=\diam_\G\bigl\{\widetilde{g}(\widehat{\sigma_0}), \widetilde{g}(\widehat{\sigma_1}),\ldots,\widetilde{g}(\widehat{\sigma_l})\bigr\}  \\
&=\max_{1\leq i< j\leq l}{d_\G(\widetilde{g}(\widehat{\sigma_i}), \widetilde{g}(\widehat{\sigma_j}))} \leq\max_{1\leq j\leq l}{\frac{1}{2}\diam_\G{(\Psi\circ g)(\sigma_j})} \leq\tfrac{1}{2}\diam_\G{(\Psi\circ g)(\sigma_l}) \\
& \leq \frac{1}{2}\cdot\frac{1+2\xi}{1+\xi}(\beta+\xi\eps),\text{ from (\ref{eq:diam-estimate-1})}\\
&\leq(1-\xi)\beta-\xi\eps,\text{ since }\quad\eps\leq\beta/3\text{ and }0<\xi<1/4.    
\end{align*}
Thus, $\widetilde{g}(\tau_l)$ is a simplex of $\Ri^L_{(1-\xi)\beta-\xi\eps}(\G)$, which implies that $\widetilde{g}$ is a simplicial map.  
Naturally, $\Phi\circ\widetilde{g}: \operatorname{sd}(\K) \rightarrow \Ri^\eps_\beta(\s)$ is a simplicial map. Next, we intend to apply Lemma~\ref{lem:barycenter-map-lemma}, to the subdiagram of \eqref{eq:rips_comm_diagram} corresponding to maps $\Phi \circ \widetilde{g}$, $\sd$ and $g$. The assumption (1) of Lemma \ref{lem:barycenter-map-lemma} holds by definition of $\widetilde{g}$, we need to show the second condition (2) holds as well i.e. show that vertices in $g(\sigma_l) \cup \{(\Phi\circ\widetilde{g})(\widehat{\sigma_l})\}$ form a simplex $[(\Phi\circ\widetilde{g})(v_0), (\Phi\circ\widetilde{g})(v_1), \ldots,(\Phi\circ\widetilde{g})(v_l), (\Phi\circ\widetilde{g})(\widehat{\sigma}_l)]$ in $\Ri^\eps_\beta(\s)$. Observe
\begin{align*}
d^\eps_\s((\Phi\circ \widetilde{g})(v_i), (\Phi\circ\widetilde{g})(\widehat{\sigma}_l)) \leq \dfrac{d_\G(\widetilde{g}(v_i), \widetilde{g}(\widehat{\sigma}_l)) + \xi\eps}{(1-\xi)} < \beta,
\end{align*}
since $d_\G(\widetilde{g}(v_i), \widetilde{g}(\widehat{\sigma}_l))<(1-\xi)\beta-\xi\eps$ by the fact that $\widetilde{g}$ is simplicial. Since $g(v_i)=(\Phi\circ\widetilde{g})(v_i)$
\[[g(v_0), g(v_1), \ldots, g(v_l), (\Phi\circ\widetilde{g})(\widehat{\sigma}_l)]\]
is a simplex in $\Ri^\eps_{\beta}(\s)$.
Thus the assumptions of Lemma~\ref{lem:barycenter-map-lemma} are satisfied implying implies that $g$ and $\Phi\circ\tilde{g}$ induce homotopic maps and 
\[
\pi_\ast(\Phi)([\widetilde{g}])=[\Phi\circ\widetilde{g}]=[g],
\]
which yields the claim that $\pi_\ast(\Phi)$ is an epimorphism. 

Applying Whitehead's Theorem yields the desired homotopy equivalence 
$\Ri^\eps_\beta(\s)\simeq \G$.
\end{proof}
Fixing $\xi=1/6$, we get a simpler but weaker statement.
\begin{corollary}[Simpler Homotopy Reconstruction]\label{cor:rips}
Let $\G \subset \mathbb{R}^N$ be a compact, connected metric graph.  
For any positive $\beta<\ell(\G)/4$, choose a positive $\eps\leq\beta/3$ such that $\delta^{\eps}_{\beta}(\G)\leq8/7$. 
If $\s\subset \R^N$ is compact and $d_H(\G,\s)<\eps/12$, then we have a homotopy equivalence $\Ri^\eps_\beta(\s)\simeq\G$.
\end{corollary}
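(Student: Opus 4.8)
The plan is to obtain this statement as the special case $\xi=\tfrac{1}{6}$ of Theorem~\ref{thm:hom-eq}. First I would check that the choice is admissible, i.e.\ that $\tfrac{1}{6}\in\left(0,\tfrac{1}{4}\right)$, which is immediate. With $\xi$ fixed at $\tfrac{1}{6}$, I would then rewrite each of the three quantitative hypotheses of Theorem~\ref{thm:hom-eq} in terms of $\beta$ and $\eps$ alone. The bound $\beta<\ell(\G)/4$ and the constraint $\eps\leq\beta/3$ do not involve $\xi$ and so carry over verbatim. The distortion condition $\delta^{\eps}_{\beta}(\G)\leq\frac{1+2\xi}{1+\xi}$ becomes $\delta^{\eps}_{\beta}(\G)\leq\frac{1+1/3}{1+1/6}=\frac{4/3}{7/6}=\frac{8}{7}$, and the sampling condition $d_H(\G,\S)<\tfrac{1}{2}\xi\eps$ becomes $d_H(\G,\S)<\tfrac{1}{12}\eps$. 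Since these are exactly the hypotheses stated in the corollary, Theorem~\ref{thm:hom-eq} applies and yields the homotopy equivalence $\Ri^\eps_\beta(\S)\simeq\G$.

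There is no substantive obstacle here; the argument is a direct instantiation. The only points requiring (minor) care are the arithmetic simplification $\frac{1+2\xi}{1+\xi}=\tfrac{8}{7}$ at $\xi=\tfrac16$ and the observation that the footnoted remark guaranteeing the existence of a sufficiently small $\eps$---namely $\delta^\eps_\beta(\G)\to1$ as $\eps\to0$---is unaffected by fixing $\xi$, since that limit statement concerns only the large-scale distortion and is independent of $\xi$. Hence such an $\eps$ exists for the corollary's threshold $\tfrac87$ just as it does for the theorem's threshold $\frac{1+2\xi}{1+\xi}$.
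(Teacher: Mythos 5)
Your proposal is correct and is exactly the paper's argument: the corollary is obtained by specializing Theorem~\ref{thm:hom-eq} to $\xi=\tfrac{1}{6}$, with the arithmetic $\frac{1+2\xi}{1+\xi}=\tfrac{8}{7}$ and $\tfrac{1}{2}\xi\eps=\tfrac{\eps}{12}$ checking out as you state.
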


\section{Shadows of Simplicial Complexes}\label{sec:shadow}
Let $\K$ be an abstract simplicial complex with vertices in $\R^N$, i.e., $\K^{(0)}\subset\R^N$. 
In this section, we define the shadow (or \emph{geometric projection}) of $\K$ as a subset of $\R^N$ and study the natural shadow projection map. 

The \emph{shadow projection} map $p\colon|\K|\to\mathbb{R}^N$ sends a vertex $v\in\K^{(0)}$ to the corresponding point in $\mathbb{R}^N$ then extends linearly to all points of the geometric realization $|\K|$.
Note that $p$ is continuous.
\begin{definition}[Shadow]
We define the \emph{shadow} of $\K$ as its image under the projection map $p$, i.e.,
\[
\sh(\K)\coloneqq
\bigcup_{\sigma=[v_0,v_1,\ldots,v_k]\in\K}\mathrm{Conv}(\sigma),
\]
where $\mathrm{Conv}(\cdot)$ denotes  the convex hull of a subset in $\mathbb{R}^N$. 
\end{definition}
Since the shadow is a polyhedral subset of $\R^N$, it can be realized by the $N$-dimensional skeleton of $\K$ by Carath\'eodory’s theorem~\cite{Eckhoff1993-kf}.
We now describe a special simplicial complex decomposition of the \emph{shadow} in $\R^2$; see Figure~\ref{fig:shadow}.
We call it the \emph{shadow complex} and denote it by $\sc(\K)$.
\begin{enumerate}[(A)]
\item A \emph{shadow vertex} $v\in\sc(\K)$ is either $v\in\K^{(0)}$ or a transverse intersection of $p(e_1)$ and $p(e_2)$ for edges $e_1,e_2\in\K^{(1)}$. The transverse intersections are shown in red in Figure~\ref{fig:shadow}.
\item Triangulate the planar shadow using the shadow vertices such that a shadow edge or face does not contain any other vertices of the shadow. 
Consequently, the realization of any shadow simplex $\sigma\in\sc(\K)$ is contained in $p(\tau)$ for some $\tau\in\K$.
\end{enumerate}

\begin{remark}
Note that the triangulation described by (B) above may not be unique. 
We abuse notation here to denote any such triangulation by $\sc(\K)$. 
\end{remark}

\begin{figure}[htb]
\centering
\begin{subfigure}[t]{0.3\textwidth}
\centering
\begin{tikzpicture}[scale=1.2]
\path[name path=1] (3, 1.5) -- (0,1);
\filldraw[thick, fill=blue!10, draw=blue!50, name path=2] (0, 0) -- (2,0) -- (1,2) -- cycle;
\path[name path=3] (2.5, 2) -- (1.2,0.4);
\draw[thick, draw=blue!50, name intersections={of=1 and 2}] (3, 1.5) -- (intersection-1);
\draw[thick, draw=blue!50, name intersections={of=1 and 2}, dashed] (intersection-1) -- (intersection-2);
\draw[thick, draw=blue!50, name intersections={of=1 and 2}] (intersection-2) -- (0,1);
\draw[dashed, thick, draw=blue!50, name intersections={of=2 and 3}] (1.2, 0.4) -- (intersection-1);
\draw[thick, draw=blue!50, name intersections={of=2 and 3, name=E}, name intersections={of=1 and 3, name=F},shorten >=0.1cm] (E-1) -- (F-1);
\draw[thick, draw=blue!50, name intersections={of=1 and 3, name=F}, shorten <=0.1cm] (F-1) -- (2.5, 2);
\filldraw[fill=blue!10, draw=blue!80] (0, 0) circle (2pt);
\filldraw[fill=blue!10, draw=blue!80] (2, 0) circle (2pt);
\filldraw[fill=blue!10, draw=blue!80] (1, 2) circle (2pt);
\filldraw[fill=blue!10, draw=blue!80] (0, 1) circle (2pt);
\filldraw[fill=blue!10, draw=blue!80] (3, 1.5) circle (2pt);
\filldraw[fill=blue!10, draw=blue!80] (2.5, 2) circle (2pt);
\filldraw[fill=blue!10, draw=blue!80] (1.2, 0.4) circle (2pt);
\end{tikzpicture}
\end{subfigure}%
~ 
\begin{subfigure}[t]{0.3\textwidth}
\centering
\begin{tikzpicture}[scale=1.2]
\path[name path=1] (3, 1.5) -- (0,1);
\filldraw[thick, fill=blue!10, draw= blue!30, name path=2] (0, 0) -- (2,0) -- (1,2) -- cycle;
\path[name path=3] (2.5, 2) -- (1.2,0.4);
\draw[thick, draw=blue!30, name intersections={of=1 and 2}] (3, 1.5) -- (intersection-1);
\draw[thick, draw=blue!30, name intersections={of=1 and 2}] (intersection-2) -- (0,1);
\draw[thick, draw=blue!30, name intersections={of=2 and 3, name=E}, name intersections={of=1 and 3, name=F},shorten >=0.1cm] (E-1) -- (2.5,2);
\end{tikzpicture}
\end{subfigure}%
~
\begin{subfigure}[t]{0.3\textwidth}
\centering
\begin{tikzpicture}[scale=1.2]
\path[name path=1] (3, 1.5) -- (0,1);
\filldraw[thick, fill=blue!10, draw=blue!50, name path=2] (0, 0) -- (2,0) -- (1,2) -- cycle;
\path[name path=3] (2.5, 2) -- (1.2,0.4);
\draw[thick, draw=blue!50, name intersections={of=1 and 2}] (3, 1.5) -- (intersection-1);
\draw[thick, draw=blue!50, name intersections={of=1 and 2}] (intersection-1) -- (intersection-2);
\draw[thick, draw=blue!50, name intersections={of=1 and 2}] (intersection-2) -- (0,1);
\draw[thick, draw=blue!50, name intersections={of=2 and 3}] (1.2, 0.4) -- (intersection-1);
\draw[thick, draw=blue!50, name intersections={of=2 and 3, name=E}, name intersections={of=1 and 3, name=F},shorten >=0.1cm] (E-1) -- (F-1);
\draw[thick, draw=blue!50, name intersections={of=1 and 3, name=F}, shorten <=0.1cm] (F-1) -- (2.5, 2);
\draw[thick, draw=blue!50, name intersections={of=1 and 2, name=E}, name intersections={of=1 and 3, name=F}] (E-2) -- (1.2, 0.4);
\draw[thick, draw=blue!50, name intersections={of=1 and 2, name=E}, name intersections={of=1 and 3, name=F}] (E-1) -- (1.2, 0.4);
\draw[thick, draw=blue!50] (1.2, 0.4) -- (0,0);
\draw[thick, draw=blue!50] (1.2, 0.4) -- (2,0);
\filldraw[fill=blue!10, draw=blue!80] (0, 0) circle (2pt);
\filldraw[fill=blue!10, draw=blue!80] (2, 0) circle (2pt);
\filldraw[fill=blue!10, draw=blue!80] (1, 2) circle (2pt);
\filldraw[fill=blue!10, draw=blue!80] (0, 1) circle (2pt);
\filldraw[fill=blue!10, draw=blue!80] (3, 1.5) circle (2pt);
\filldraw[fill=blue!10, draw=blue!80] (2.5, 2) circle (2pt);
\filldraw[fill=blue!10, draw=blue!80] (1.2, 0.4) circle (2pt);
\filldraw[fill=red!10, draw=red!80,name intersections={of=1 and 2}] (intersection-1) circle (2pt);
\filldraw[fill=red!10, draw=red!80,name intersections={of=1 and 2}] (intersection-2) circle (2pt);
\filldraw[fill=red!10, draw=red!80,name intersections={of=1 and 3}] (intersection-1) circle (2pt);
\filldraw[fill=red!10, draw=red!80,name intersections={of=2 and 3}] (intersection-1) circle (2pt);
\end{tikzpicture}
\end{subfigure}
\caption{[Left] An abstract simplicial complex $\K$ with planar vertices has been depicted. [Middle] The shadow $\sh(\K)$ has been shown as a subset of the plane. 
[Right] The shadow complex $\sc(\K)$ has been drawn. 
The new shadow vertices due to transverse intersection are shown in red.}
\label{fig:shadow}
\end{figure}

We use the following notation throughout the rest of the paper.
We denote the simplices of an abstract simplicial complex $\K$ by square braces, e.g., $[ABC]$ for an abstract simplex on vertices $A, B, C\in\R^N$. 
We denote the convex-hull of Euclidean points without any adornment, e.g., $ABC$ denotes the Euclidean (filled in) triangle formed by the vertices $A,B,C$. Lastly, the Euclidean length of a line segment $AB$ is specified by $\overline{AB}$.

\subsection{$\pi_1$-epimorphism in $\R^2$}

We provide a sufficient condition for the abstract simplicial complex $\K$ so that $p$ induces an epimorphism on the fundamental group.

As already described in the introduction, we mention here the works of \cite{Chambers2010}, where the authors considered the homotopy equivalence of the shadow projection in the particular case when $\K$ is the (Euclidean) Vietoris--Rips complex of a finite, planar point set.
The projection was shown to induce $\pi_1$-isomorphism.
We generalize such results to any simplicial complex $\K$ by proposing a general \emph{lifting condition}  to ensure $\pi_1$-epimorphism of the projection map $p$.
For geometric graph reconstruction, we apply the condition to the Vietoris--Rips complex of planar samples but under a possibly non-Euclidean metric: the $\eps$-path metric.
In particular, we infer below that when $\K$ is the Vietoris--Rips complex under the Euclidean metric (as considered in \cite{Chambers2010}), $\K$ automatically satisfies the lifting condition.

Let $\K$ be a simplicial complex with vertices in $\R^2$.
Any path $\gamma$ in the geometric realization of its $1$-skeleton $|\K^{(1)}|$ can be described by a sequence of oriented edges in $\K$, and its projection $p(\gamma)$ must also form a path in the $1$-skeleton of the shadow $\sh(\K)$, such paths are further referred to as \emph{shadow paths}.
However, the converse is not generally true, i.e., every shadow path is not necessarily the projection of a path in the geometric realization of $\K$. 
Nonetheless, every shadow path can be \emph{lifted up to homotopy} to $\K$ under the following lifting condition.
\begin{definition}[Shadow Path Lifting]\label{def:lifting-cond}
We say that an abstract simplicial complex $|\K|$ with $\K^{(0)}\subset\R^2$ satisfies the \emph{lifting condition} up to homotopy if whenever the images $AB$ and $CD$ of two edges $[AB]$ and $[CD]$ of $\K$ intersect, then there exist vertices $E,F\in\K^{(0)}$ such that either 
\begin{enumerate}[(A)]
\item $[ABE]$ and $[CDE]$ are simplices of $\K$, 
\item[] or 
\item $[AEF]$ and $[CDEF]$ are simplices of $\K$ with $EF$ intersecting $AB$.
\end{enumerate}
\end{definition}
We remark that in the setting where $\K$ is the Euclidean Vietoris--Rips complex (as considered in \cite{Chambers2010}), one can choose $E$ to be one of the four initial vertices that is closest to the intersection of $AB$ and $CD$ to meet condition (A).

The following theorem guarantees that $p$ induces epimorphism on the fundamental group of $\K$.
\begin{theorem}[$\pi_1$--epimorphism]\label{thm:shadow-surjection}
Let $\K$ satisfy the above lifting condition. 
Then, the projection map $p$ induces an epimorphism on the fundamental groups.
\end{theorem}
\begin{proof}
Every oriented shadow path $\gamma$ in $\sh(\K)$ can be associated (up to homotopy) with a sequence $\widetilde{\gamma}$ of oriented edges in $\K$. Note that these edges from $\K$ may not necessarily form a path, but projections of consecutive edges must intersect at a shadow vertex.
If none of the intersections are transverse, there is nothing to prove.

\begin{figure}[htb]
\centering
\begin{subfigure}[t]{0.45\textwidth}
\centering
\begin{tikzpicture}[scale=0.8]
\coordinate (A) at (-2, 0);
\coordinate (B) at (2, 0);
\coordinate (C) at (0.5, -2);
\coordinate (D) at (1.5, 1.5);
\coordinate (E) at (-1.2, -1.5);
\coordinate (F) at (-1, 1);

\fill[fill=blue!20, opacity=1] (A) -- (E) -- (B) -- cycle;
\draw[thick, draw=blue!70, name path=1] (A) -- (B);
\draw[thick, draw=blue!70, name path=3] (D) -- (C);
\draw[thick, draw=blue!70] (E) -- (B);
\draw[thick, draw=blue!70] (D) -- (E);
\draw[thick, draw=blue!70] (E) -- (C);
\draw[thick, draw=blue!70] (A) -- (E);
\filldraw[fill=blue!10, draw=blue!80] (A) circle (2pt) node[anchor=east] {$A$};
\filldraw[fill=blue!10, draw=blue!80] (B) circle (2pt) node[anchor=west] {$B$};
\filldraw[fill=blue!10, draw=blue!80] (D) circle (2pt) node[anchor=south] {$D$};
\filldraw[fill=blue!10, draw=blue!80] (C) circle (2pt) node[anchor=north] {$C$};
\filldraw[fill=blue!10, draw=blue!80] (E) circle (2pt) node[anchor=north] {$E$};
\filldraw[fill=red!10, draw=red!80, name intersections={of=1 and 3}] (intersection-1) circle (2pt) node[anchor=south west, xshift=1.2ex] {$v$};
\end{tikzpicture}
\end{subfigure}%
~ 
\begin{subfigure}[t]{0.45\textwidth}
\centering
\begin{tikzpicture}[scale=0.8]
\coordinate (A) at (-2, 0);
\coordinate (B) at (2, 0);
\coordinate (C) at (0.5, -2);
\coordinate (D) at (1.5, 1.5);
\coordinate (E) at (-1.2, -1.5);
\coordinate (F) at (-1, 1);

\fill[fill=blue!20, opacity=1] (E) -- (C) -- (D) -- cycle;
\draw[thick, draw=blue!70, name path=1] (A) -- (B);
\draw[thick, draw=blue!70, name path=3] (D) -- (C);
\draw[thick, draw=blue!70] (E) -- (B);
\draw[thick, draw=blue!70] (D) -- (E);
\draw[thick, draw=blue!70] (E) -- (C);
\draw[thick, draw=blue!70] (A) -- (E);
\filldraw[fill=blue!10, draw=blue!80] (A) circle (2pt) node[anchor=east] {$A$};
\filldraw[fill=blue!10, draw=blue!80] (B) circle (2pt) node[anchor=west] {$B$};
\filldraw[fill=blue!10, draw=blue!80] (D) circle (2pt) node[anchor=south] {$D$};
\filldraw[fill=blue!10, draw=blue!80] (C) circle (2pt) node[anchor=north] {$C$};
\filldraw[fill=blue!10, draw=blue!80] (E) circle (2pt) node[anchor=north] {$E$};
\filldraw[fill=red!10, draw=red!80, name intersections={of=1 and 3}] (intersection-1) circle (2pt) node[anchor=south west, xshift=1.2ex] {$v$};
\end{tikzpicture}
\end{subfigure}%
\caption{Case A}
\label{fig:shadow-A}
\end{figure}

Let us, therefore, assume that $\widetilde{\gamma}$ contains two consecutive oriented edges $[AB]$ and $[CD]$ of $\K$ that transversely intersect at $v$ in $\sh(\K)$.
Without any loss of generality, it suffices to show that $AvD$ in $\gamma$ can be replaced with a sequence of edges of $\K$ starting at $A$ and ending at $D$, consecutively intersecting in $\K^{(0)}$ and homotopic to $AvD$ in $\sh(\K)$ w.r.t. the endpoints.
We now construct such sequences for $AvD$, considering each of the lifting conditions (A) and (B).\\

\noindent\textbf{Case A.}
If condition (A) is satisfied (Figure~\ref{fig:shadow-A}), then we replace $AvD$ with the sequence $\{AE, ED\}\subset\K$:
\begin{align*}
AED &\simeq (AE)D\simeq (AvE)D,~\text{Figure~\ref{fig:shadow-A}(a)} \\
&\simeq A(vED)\simeq A(vD),~\text{Figure~\ref{fig:shadow-A}(b)} \\
&\simeq AvD.
\end{align*}
Using the above homotopy, note that $AvC$, $BvD$, and $BvC$ can similarly be replaced with a sequence.

\noindent\textbf{Case B.}
If condition (B) is satisfied (Figure~\ref{fig:shadow-B}), we denote by $w$ the intersection of $EF$ and $AB$. Then, we replace $AvD$ with the sequence $\{AF, FD\}\subset\K$:
\begin{align*}
AFD &\simeq (AF)D \simeq (AwF)D,~\text{Figure~\ref{fig:shadow-B}(a)} \\
&\simeq A(wF)D \simeq A(wvF)D\simeq Aw(vFD) \simeq Aw(vD),~\text{Figure~\ref{fig:shadow-B}(b)} \\
&\simeq (Awv)D\simeq AvD.
\end{align*}
\begin{figure}[htb]
\centering
\begin{subfigure}[t]{0.45\textwidth}
\centering
\begin{tikzpicture}[scale=0.8]
\coordinate (A) at (-2, 0);
\coordinate (B) at (2, 0);
\coordinate (C) at (0.5, -2);
\coordinate (D) at (1.5, 1.5);
\coordinate (E) at (-1.2, -1.5);
\coordinate (F) at (-1, 1);

\fill[fill=blue!20, opacity=1] (A) -- (E) -- (F) -- cycle;
\draw[thick, draw=blue!70, name path=1] (A) -- (B);
\draw[thick, draw=blue!70, name path=3] (D) -- (C);
\draw[thick, draw=blue!70] (D) -- (F);
\draw[thick, draw=blue!70] (F) -- (C);
\draw[thick, draw=blue!70] (D) -- (E);
\draw[thick, draw=blue!70] (E) -- (C);
\draw[thick, draw=blue!70] (A) -- (E);
\draw[thick, draw=blue!70, name path=2] (E) -- (F);
\draw[thick, draw=blue!70] (A) -- (F);
\filldraw[fill=blue!10, draw=blue!80] (A) circle (2pt) node[anchor=east] {$A$};
\filldraw[fill=blue!10, draw=blue!80] (B) circle (2pt) node[anchor=west] {$B$};
\filldraw[fill=blue!10, draw=blue!80] (D) circle (2pt) node[anchor=south] {$D$};
\filldraw[fill=blue!10, draw=blue!80] (C) circle (2pt) node[anchor=north] {$C$};
\filldraw[fill=blue!10, draw=blue!80] (F) circle (2pt) node[anchor=east] {$F$};
\filldraw[fill=blue!10, draw=blue!80] (E) circle (2pt) node[anchor=north] {$E$};
\filldraw[fill=red!10, draw=red!80, name intersections={of=1 and 2}] (intersection-1) circle (2pt) node[anchor=north west] {$w$};
\filldraw[fill=red!10, draw=red!80, name intersections={of=1 and 3}] (intersection-1) circle (2pt) node[anchor=south west, xshift=1.2ex] {$v$};
\end{tikzpicture}
\end{subfigure}%
~ 
\begin{subfigure}[t]{0.45\textwidth}
\centering
\begin{tikzpicture}[scale=0.8]
\coordinate (A) at (-2, 0);
\coordinate (B) at (2, 0);
\coordinate (C) at (0.5, -2);
\coordinate (D) at (1.5, 1.5);
\coordinate (E) at (-1.2, -1.5);
\coordinate (F) at (-1, 1);

\fill[fill=blue!20, opacity=1] (F) -- (E) -- (C) -- (D) -- cycle;
\draw[thick, draw=blue!70, name path=1] (A) -- (B);
\draw[thick, draw=blue!70, name path=3] (D) -- (C);
\draw[thick, draw=blue!70] (D) -- (F);
\draw[thick, draw=blue!30] (F) -- (C);
\draw[thick, draw=blue!30] (D) -- (E);
\draw[thick, draw=blue!70] (E) -- (C);
\draw[thick, draw=blue!70] (A) -- (E);
\draw[thick, draw=blue!70, name path=2] (E) -- (F);
\draw[thick, draw=blue!70] (A) -- (F);
\filldraw[fill=blue!10, draw=blue!80] (A) circle (2pt) node[anchor=east] {$A$};
\filldraw[fill=blue!10, draw=blue!80] (B) circle (2pt) node[anchor=west] {$B$};
\filldraw[fill=blue!10, draw=blue!80] (D) circle (2pt) node[anchor=south] {$D$};
\filldraw[fill=blue!10, draw=blue!80] (C) circle (2pt) node[anchor=north] {$C$};
\filldraw[fill=blue!10, draw=blue!80] (F) circle (2pt) node[anchor=east] {$F$};
\filldraw[fill=blue!10, draw=blue!80] (E) circle (2pt) node[anchor=north] {$E$};
\filldraw[fill=red!10, draw=red!80, name intersections={of=1 and 2}] (intersection-1) circle (2pt) node[anchor=north west] {$w$};
\filldraw[fill=red!10, draw=red!80, name intersections={of=1 and 3}] (intersection-1) circle (2pt) node[anchor=south west, xshift=1.2ex] {$v$};
\end{tikzpicture}
\end{subfigure}%
\caption{Case B}
\label{fig:shadow-B}
\end{figure}

Using the above homotopy, note that $AvC$, $BvD$, and $BvC$ can similarly be replaced with a sequence.
\end{proof}

\section{Geometric Reconstruction using Vietoris--Rips Shadow}\label{sec:geom}
This section considers the geometric reconstruction of a Euclidean graph $\G$ from a noisy Euclidean sample $\s$ using the shadow of Vietoris--Rips complexes.
We assume that both the graph and sample are hosted in the plane, i.e., $N=2$.
For homotopy reconstruction in Section~\ref{sec:top}, $\G$ is assumed to be only compact and connected.
However, for geometric reconstruction, we impose a few more \emph{geometric regularity} assumptions on $\G$.

\subsection{Assumptions for Geometric Reconstruction}\label{sec:assumptions}
\begin{enumerate}
\item[(A1)] $\G$ is compact and connected with $|\mathscr{E}(\G)|<\infty$;
\item[(A2)] any two edges of $\G$ are incident to at most one common vertex;
\item[(A3)] each (open) edge of $\G$ is at least $C^1$;
\item[(A4)] the tangents of each pair of incident edges $e_1,e_2\in\mathscr{E}(\G)$ of $\G$ make an angle in $(0,\pi]$, and is denoted by $\angle{e_1e_2}$.
We set $\angle{e_1e_2}=\infty$ if $e_1,e_2$ are not incident.
\end{enumerate}

\subsection{Shadow Radius}
For a graph $\G$ having properties (A1--A4), we first define the following quantity:
\begin{equation}\label{eq:theta}
\Theta\coloneqq
\max\left\{\frac{1}{2},\cos^2{\left(\frac{1}{2}\min_{e_1,e_2\in \mathscr{E}(\G)}\angle{e_1e_2}\right)}\right\}.
\end{equation}
Clearly, $1/2\leq\Theta<1$, due to the fact that $\angle{e_1e_2}\in(0, \pi]$ for $e_1, e_2\in\mathscr{E}(\G)$.
In the trivial case, where $\mathscr{E}(\G)$ is singleton, we take $\Theta=1/2$.

We are now ready to define the shadow radius.
\begin{definition}[Shadow Radius]\label{def:shadow}
Let $\G\subset\R^2$ be a graph. 
The \emph{shadow radius} of $\G$, denoted $\Delta(\G)$, is defined as the least upper bound for $r\geq0$ satisfying the following property:

($\star$) For $a, b\in \G$ with $\|a-b\|\leq r$ if $q\in \G\cap ab^\eps$ for some $0\leq\eps\leq\frac{1}{2}[1-\Theta]^{3/2}r$, then 
\[\min\left\{d_\G(a, q),d_\G(b,q)\right\}\leq\frac{1+\Theta}{2} d_\G(a,b)+\frac{\eps}{1-\Theta}.\]
\end{definition}
Here, $ab^\eps$ denotes the Euclidean $\eps$-thickening of the segment $ab$.
\begin{figure}[ht]
\centering
\includegraphics[width=0.35\linewidth]{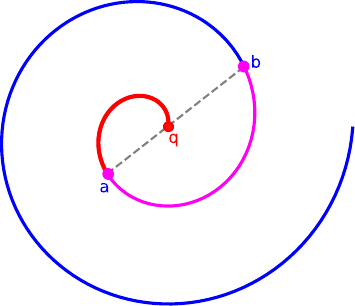}
\medskip
\caption{For a spiral shaped graph $\G$, the quantity $\Theta=1/2$ (since all edge tangents at vertices meet at angle $\pi$, the second term in the definition of $\Theta$ vanishes in~\ref{eq:theta}), and the condition ($\star$) in Definition~\ref{def:shadow} may not satisfy for all $a,b\in \G$. 
However, if $d_\G(a,b)$ is small, i.e., less than $\Delta(\G)$, the condition is always satisfied, as shown.}
\label{fig:spiral-Delta(G)}
\end{figure}
The above property ($\star$) simply says if a point $q\in\G$ lies in the close Euclidean proximity to the segment joining points $a,b\in\G$, then $q$ must be within `controlled' geodesic proximity to either $a$ or $b$; see Figure~\ref{fig:spiral-Delta(G)}.

If $\G$ satisfies assumptions (A1--A4), we show that its shadow radius is positive.
See Appendix for a proof.
\begin{proposition}[Positivity of Shadow Radius]\label{prop:shadow-radius}
For a planar graph $\G$ having properties (A1--A4), it satisfies $\Delta(\G)>0$.
\end{proposition}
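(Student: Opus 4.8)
The plan is to show that the defining property ($\star$) of the shadow radius holds for all sufficiently small $r>0$, which immediately gives $\Delta(\G)>0$. The key difficulty is the geometric content: if $a,b\in\G$ are close in the path metric and $q\in\G$ lies within a thin Euclidean tube $ab^\eps$ around the segment $ab$, we must bound the path distance from $q$ to the nearer of $a,b$. The strategy is to exploit compactness of $\G$ together with assumptions (A1)--(A4) to find a uniform scale below which the graph ``looks straight'' near any point, so that any sub-arc of $\G$ of short enough $d_\G$-length is nearly a Euclidean segment, and then analyze the three cases for where $q$ can sit: (i) on the same edge as $a$, (ii) on the same edge as $b$, or (iii) on a different edge passing near the segment $ab$.

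First I would use compactness and (A3) to produce a scale $r_0>0$ such that any arc $\gamma\subset\G$ with $\diam_\G(\gamma)<r_0$ is contained in the union of at most two edges meeting at a vertex, and on each such edge the unit-speed parametrization has tangent direction varying by less than a prescribed small amount (this uses local Lipschitz-ness / $C^1$-ness and a finite cover of $\G$). On a single $C^1$ edge traversed over arc length $L$, the chord length $|xy|$ and arc length $d_\G(x,y)$ satisfy $(1-\kappa)d_\G(x,y)\le|xy|\le d_\G(x,y)$ for a distortion factor $\kappa$ that tends to $0$ with the scale; choosing the scale small enough we can force $\kappa$ as small as needed, in particular below $1-\sqrt{1-\Theta}$ or any explicit threshold appearing in ($\star$).

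Next I would treat the cases. If $a,b,q$ all lie on a single edge (or on a geodesic through one vertex), then $d_\G(a,q)+d_\G(q,b)=d_\G(a,b)$ when $q$ is between $a$ and $b$ along the arc, and the bound $\min\{d_\G(a,q),d_\G(b,q)\}\le\frac12 d_\G(a,b)\le\frac{1+\Theta}{2}d_\G(a,b)+\frac{\eps}{1-\Theta}$ is immediate; if $q$ is on the arc but outside $[a,b]$, then $q$ being within Euclidean distance $\eps$ of the segment $ab$ forces $q$ to be within $O(\eps)$ arc-length of an endpoint, again giving the bound since $\eps/(1-\Theta)$ dominates. The genuinely new case is when $q$ lies on an edge $e'$ incident to the edge(s) containing $a$ and $b$ at a common vertex $v$: here the angle condition (A4) enters. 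The segment $ab$ runs nearly parallel to the edge through $a$ (and through $b$), so for a point $q$ on $e'$ at arc-length $t$ from $v$ to be within Euclidean distance $\eps$ of $ab$, the transverse displacement $t\sin(\angle e e'/2)$-type quantity (up to the $C^1$ distortion) must be $\lesssim\eps$; solving for $t$ and combining with $d_\G(a,v),d_\G(b,v)\le d_\G(a,b)$ yields $d_\G(a,q)$ or $d_\G(b,q)\le d_\G(a,v)+t$ bounded by $\frac{1+\Theta}{2}d_\G(a,b)+\frac{\eps}{1-\Theta}$, where the constant $\Theta=\cos^2(\tfrac12\min\angle)$ is exactly calibrated so that the trigonometric estimate closes. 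The main obstacle I anticipate is making the elementary plane-trigonometry estimate in this last case fully rigorous while accounting for the $C^1$ (not piecewise-linear) nature of the edges — i.e., controlling the error terms uniformly — and verifying that the explicit constants in ($\star$), namely $\tfrac12(1-\Theta)^{3/2}$ on the range of $\eps$ and $\tfrac{1+\Theta}{2}$ and $\tfrac1{1-\Theta}$ in the conclusion, are exactly what the trigonometry produces; once the uniform ``near-straightness'' scale is fixed, the rest is bookkeeping. I would therefore set $\Delta(\G)\ge r_0$ with $r_0$ coming from this construction, completing the proof.
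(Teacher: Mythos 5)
Your overall plan---fix a uniform scale at which every short arc of $\G$ is nearly straight, then split into the cases ``$q$ on the same edge'' versus ``$q$ on a different edge through a common vertex $v$'' and close the estimate with the angle condition (A4)---is exactly the strategy of the paper's proof. The easy cases (all three points on one edge, or $q$ within $O(\eps)$ of an endpoint) are handled correctly.

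The gap is in the case you correctly identify as the heart of the matter, and your sketch of the trigonometry there does not work. You argue that for $q$ at arc-length $t$ from $v$ along a different edge $e'$ to lie in $ab^\eps$, a transverse displacement of order $t\sin(\angle e e'/2)$ must be $\lesssim\eps$, and you solve for $t$, implicitly concluding $t=O(\eps)$. This is false in the configurations that actually matter. When $a$ and $b$ lie on two different edges $e_a,e_b$ meeting at $v$ (or even when both lie on $e_a$ and $e'$ bends back toward the chord), the segment $ab$ is a chord of the wedge at $v$, not a neighborhood of the line through $v$; the set of points of the ray from $v$ in the direction of $e'$ lying within $\eps$ of $ab$ is an interval of length $O(\eps)$ centered at the \emph{crossing point} $p$ of that ray with $ab$, and $\overline{vp}$ is comparable to $\overline{va}+\overline{vb}\approx d_\G(a,b)$, not to $\eps$. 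So $q$ can sit at arc-length comparable to $d_\G(a,b)$ from $v$ and still lie in $ab^\eps$. What the proof actually requires is a bound of the form
\[
\min\bigl\{\overline{av}+\overline{pv},\ \overline{bv}+\overline{pv}\bigr\}\ \le\ \cos^2\!\Bigl(\tfrac{1}{2}\min\{\angle avp,\angle bvp\}\Bigr)\,\bigl(\overline{av}+\overline{bv}\bigr),
\]
together with $\overline{pq}\le\eps\csc(\cdot)$, which is obtained in the paper by an explicit optimization (via the law of sines) over the position of $p$ on the chord; the constant $\Theta=\cos^2(\tfrac12\min\angle)$ in the definition of $\Delta(\G)$ is produced precisely by this optimization and cannot be recovered from a ``transverse displacement'' argument. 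As written, your estimate would give $d_\G(a,q)\le d_\G(a,v)+O(\eps)\le d_\G(a,b)+O(\eps)$, which does not imply the required bound $\tfrac{1+\Theta}{2}d_\G(a,b)+\tfrac{\eps}{1-\Theta}$ since $\tfrac{1+\Theta}{2}<1$. Deferring this to ``bookkeeping'' leaves the essential step of the proposition unproved.
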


\begin{remark}[Geometric content of $\Delta(\G)$]
The positivity of $\Delta(\G)$ established in Proposition~\ref{prop:shadow-radius} is tied to two concrete geometric features of $\G$. First, the condition $\Theta < 1$ requires that no two incident edges of $\G$ form a cusp at a common vertex, i.e., every pair of incident edges meets at a strictly positive angle. 
Second, the $C^1$ regularity of edges (assumption $A3$) ensures that the tangent direction varies continuously along each edge, so the modulus of continuity of the tangent map is finite and controls how $\Delta(\G)$ can be bounded below near each vertex. More precisely, the proof of Proposition~\ref{prop:shadow-radius} shows that $\Delta(\G) \geq r$, where $r > 0$ is determined by the minimum incident angle $\min_{e_1, e_2 \in \mathcal E(\G)} \angle e_1 e_2$ and the local Lipschitz constant of the edge tangents near each vertex.
\end{remark}

\begin{remark}[Relation to $\mu$-reach \cite{chazal2006sampling}]
Assumption~(A4) and the condition $\Theta < 1$ are equivalent: both
require that every pair of incident edges meets at a strictly positive
angle $\alpha_{\min} > 0$. Note that (A4) allows the angle $\pi$
(edges meeting with opposite tangent vectors, a smooth junction), and
only rules out angle $0$ (edges sharing the same tangent direction at
a vertex, a cusp).
 
To see why (A1--A4) imply a positive $\mu$-reach for $\G$ as a whole,
recall that the \emph{classical reach} of $\G$ vanishes at any
vertex with opening angle strictly less than $\pi$, since the medial
axis of $\mathbb{R}^2 \setminus \G$ passes arbitrarily close to any
such vertex. The $\mu$-reach is more selective. At a medial axis
point $p$ with two nearest points $x_1, x_2 \in \G$, the generalized
gradient satisfies $\|\nabla_\G(p)\| = \cos (\angle x_1 p x_2 /2)$.
Near a vertex where two edges meet at opening angle $\alpha$, the
nearest points lie one on each edge and the angle $\angle x_1 p x_2
\approx \pi - \alpha$, giving $\|\nabla_\G(p)\| \approx \sin(\alpha/2)$.
The point $p$ is $\mu$-critical only if $\|\nabla_\G(p)\| \leq \mu$,
i.e., $\mu \geq \sin(\alpha/2)$. Setting $\alpha = \alpha_{\min} > 0$,
the medial axis near any vertex contains no $\mu$-critical points for
any $\mu < \sin(\alpha_{\min}/2)$. Together with the $C^1$ regularity
of edges~(A3) and compactness~(A1), this gives
$\mathrm{reach}_\mu(\G) > 0$ for $\mu < \sin(\alpha_{\min}/2)$,
for $\G$ as a whole.
 
Finally, the genuine complementarity between our approach and
$\mu$-reach-based methods lies in Section~3. Theorem~3.1 (homotopy
reconstruction) requires neither assumption~(A4) nor any positive
$\mu$-reach condition. It applies to compact connected graphs whose
$\mu$-reach may vanish, for instance when incident edges share a
tangent direction at a vertex ($\alpha_{\min} = 0$, angle $= 0$),
which forces $\Theta = 1$ and violates~(A4). For Theorem~3.1, the
only relevant parameters are the systole $\ell(\G)$ and the
large-scale distortion $\delta^\varepsilon_\beta(\G)$, with no
restriction on vertex angles.
\end{remark}

\subsection{Homotopy Equivalence}
We now present our $\pi_1$-isomorphism argument for the shadow projection.
We start with the following technical lemma, which we use later to ensure that the shadow lifting condition~(\ref{def:lifting-cond}) is satisfied for a sample near a Hausdorff-close graph $\G$.
The proof is given in the appendix.
\begin{proposition}[Intersection]
\label{prop:intersection} Let $\G \subset \mathbb{R}^2$ be a planar graph having properties (A1--A4) and $\s \subset \mathbb{R}^2$ with $d_H(\s,\G) < \frac{1}{2}\xi\varepsilon$ for some $\xi \in (0,1)$ and $0\leq\eps\leq\frac{1}{(1+\xi)}(1-\Theta)^{3/2}\Delta(\G)$.
If $A, B, C, D \in \s$ with $\max\{\overline{AB},\overline{CD}\}<\Delta(\G)$ and $AB$ intersecting $CD$, then there exist $E,F\in \s$ such that either
\begin{enumerate}[(A)]
\item $\max\{\diam^\eps_\s\{A, B, E\},\diam^\eps_\s\{C, D, E\}\}\leq\max\{d^\eps_\s(A,B),d^\eps_\s(C,D)\}$, or
\item $\diam^\eps_\s\{C, D, E, F\}\leq d^\eps_\s(C, D)$ and $EF$ intersects $AB$ with \[
\min\{\diam^\eps_\s\{A, E, F\},\diam^\eps_\s\{B, E, F\}\}\leq\frac{(1-\Theta^2) d_\G(a,b)+(3+\xi-2\Theta)\eps}{2(1-\xi)(1-\Theta)}.
\]
\end{enumerate}
\end{proposition}

We immediately note, using the triangle inequality ($E$ as a pivot), the following consequence.
\begin{corollary}[Intersection]\label{cor:intersection} 
Under the assumptions of Proposition~\ref{prop:intersection}, we have 
\[\diam^\eps_\s\{A, B, C, D\}\leq2\max\big\{d^\eps_\s(A,B),d^\eps_\s(C,D)\big\}
+\frac{(1-\Theta^2) d_\G(a,b)+(3+\xi-2\Theta)\eps}{2(1-\xi)(1-\Theta)}
.\]
\end{corollary}
Using the lifting condition introduced in Definition~\ref{def:lifting-cond}, we now show the $\pi_1$-surjectivity of the shadow projection.
\begin{lemma}[Surjectivity]\label{lem:surjection}
Let $\G \subset \mathbb{R}^2$ a graph having properties (A1--A4). 
Fix any $\xi\in\left(0,\frac{1-\Theta}{6}\right)$.
For any positive $\beta<\Delta(\G)$, choose\footnote{It is possible to choose such small $\eps$, as $\delta^\eps_\beta(\G)\to1$ as $\eps\to0$.} 
a positive $\eps\leq\frac{(1-\Theta)(1-\Theta-6\xi)}{12}\beta$ such that $\delta^{\eps}_{\beta}(\G)\leq\frac{1+2\xi}{1+\xi}$. 
If $\s\subset \R^2$ with $d_H(\G,\s)<\tfrac{1}{2}\xi\eps$, then the shadow projection of $\mathcal{K}=\Ri^\eps_\beta(\s)$ induces a surjective homomorphism on $\pi_1$. 
\end{lemma}
\begin{proof}
Let $[AB],[CD]\in\K$ such that $AB$ and $CD$ intersect in the shadow of $\K$.
From the definition of $\K$, we have $\max\{d^\eps_\s(A,B), d^\eps_\s(C,D)\}<\beta$; consequently, $\max\{\overline{AB},\overline{CD}\}<\beta$ from Proposition~\ref{prop:d^esp-d^L-estimate}.
Since $\beta<\Delta(\G)$, we get 
\begin{align*}
\eps &\leq\frac{(1-\Theta)(1-\Theta-6\xi)}{12}\beta
<\frac{(1-\Theta)^2}{12}\beta\leq\frac{(1-\Theta)^{3/2}}{12}\beta,\text{ since }\Theta<1 \\
&<\frac{(1-\Theta)^{3/2}}{1+\xi}\beta,
\text{ since }\xi<1\\
&<\frac{(1-\Theta)^{3/2}}{1+\xi}\Delta(\G),
\end{align*}
Proposition~\ref{prop:intersection} implies that there must exist $E, F\in \s$ such that 
\begin{enumerate}[(A)]
\item $\max\{\diam^\eps_\s\{A, B, E\},\diam^\eps_\s\{C, D, E\}\}\leq\max\{d^\eps_\s(A,B),d^\eps_\s(C,D)\}<\beta$, or
\item $\diam^\eps_\s\{C, D, E, F\}\leq d^\eps_\s(C, D)$ and $EF$ intersects $AB$ with 
\begin{equation}\label{eq:diameter-1}
\min\{\diam^\eps_\s\{A, E, F\},\diam^\eps_\s\{B, E, F\}\}\leq\tfrac{(1-\Theta^2) d_\G(a,b)+(3+\xi-2\Theta)\eps}{2(1-\xi)(1-\Theta)}.
\end{equation}
\end{enumerate}
These two conditions relate directly to (A), (B) of Definition~\ref{def:lifting-cond}.

In the case (B), we argue that $\min\{\diam^\eps_\s\{A, E, F\},\diam^\eps_\s\{B, E, F\}\}<\beta$.
This can be shown to hold true when assumed $d_\G(a,b)<\frac{2(1-\xi)(1-\Theta)\beta-(3+\xi-2\Theta)\eps}{1-\Theta^2}$, and plugged into the RHS of \eqref{eq:diameter-1}.
So, we consider the non-trivial case by assuming $d_\G(a,b)\geq\frac{2(1-\xi)(1-\Theta)\beta-(3+\xi-2\Theta)\eps}{1-\Theta^2}$ to get
\begin{align*}
d_\G(a,b)&\geq\frac{2(1-\xi)(1-\Theta)\beta-(3+\xi-2\Theta)\eps}{1-\Theta^2}\\
&\geq\frac{2(1-\xi)(1-\Theta)\beta-4\eps}{1-\Theta^2},
\text{ since }(3+\xi-2\Theta)\leq(3+\xi)\leq4\\
&\geq\frac{2(1-\xi)(1-\Theta)\beta-4\frac{(1-\Theta)(1-\Theta-6\xi)}{12}\beta}{1-\Theta^2}
,\text{ since }\eps\leq\frac{(1-\Theta)(1-\Theta-6\xi)}{12}\\
&=\frac{6(1-\xi)-(1-\Theta-6\xi)}{3(1+\Theta)}\beta
=\frac{5+\Theta}{3+3\Theta}\beta
>\beta,\text{ since }\Theta<1.
\end{align*}
Therefore, in this case, using Proposition~\ref{prop:delta_path} (for $R=\beta$ and $\alpha=\frac{1}{2}\xi\eps$), we get
\begin{align*}
&\min\{\diam^\eps_\s\{A, E, F\},\diam^\eps_\s\{B, E, F\}\} \\
&\leq\frac{(1-\Theta^2) d_\G(a,b)+(3+\xi-2\Theta)\eps}{2(1-\xi)(1-\Theta)}\\
&\leq\frac{(1-\Theta^2)\delta^\eps_\beta(\G)(\beta+\xi\eps)+(3+\xi-2\Theta)\eps}{2(1-\xi)(1-\Theta)}\\
&\leq\beta,\text{ since }\delta^\eps_\beta(\G)\leq\frac{1+2\xi}{1+\xi}\text{ and }\eps\leq\frac{(1-\Theta)(1-\Theta-6\xi)}{8}\beta.
\end{align*}
Consequently, $\K$ satisfies the lifting condition (Definition~\ref{def:lifting-cond}). 
Using Theorem~\ref{thm:shadow-surjection}, we then conclude the result.
\end{proof}
We now prove the $\pi_k$-injectivity of the shadow projection for any $k\geq0$.
\begin{lemma}[Injectivity]\label{lem:injection}
Let $\G \subset \mathbb{R}^2$ a graph having properties (A1--A4).
Fix any $\xi\in\left(0,\frac{1-\Theta}{6}\right)$.
For any positive $\beta<\min\left\{\Delta(\G),\frac{\ell(\G)}{18}\right\}$, choose 
a positive $\eps\leq\frac{(1-\Theta)(1-\Theta-6\xi)}{12}\beta$ such that $\delta^{\eps}_{\beta}(\G)\leq\frac{1+2\xi}{1+\xi}$. 
If $\s\subset \R^2$ with $d_H(\G,\s)<\tfrac{1}{2}\xi\eps$, then we the shadow projection of $\mathcal{K}=\Ri^\eps_\beta(\s)$ induces an injective homomorphism on $\pi_k$ for any $k\geq0$. 
\end{lemma}
\begin{proof}
Since $\xi\in\left(0,\frac{1}{4}\right)$ and $\eps\leq\beta/3$, we can reuse the idea of Diagram~\ref{eq:rips_comm_diagram} from the proof of Theorem~\ref{thm:hom-eq}:
\begin{equation}\label{eq:rips_shadow_diagram}
\begin{tikzcd}
{|\Ri^L_{(1-\xi)\beta-\xi\varepsilon}(\G)|} && {|\Ri^\varepsilon_{\beta}(\s)|} && {|\Ri^L_{6\beta}(\G)|} \\ \\
&& {|\sc(\Ri^\eps_\beta(\s))|} &&
\arrow["p"', from=1-3, to=3-3]
\arrow["|\Psi|", from=1-3, to=1-5]
\arrow["|\Phi|", from=1-1, to=1-3]
\arrow[blue, "|\Psi'|"', from=3-3, to=1-5]
\end{tikzcd}
\end{equation}
Here, $\sc(\Ri^\eps_\beta(\s))$ denotes the shadow \emph{complex} of $\Ri^\eps_\beta(\s)$ as defined in Definition~\ref{def:shadow}.
We first define a simplicial map $\Psi'\colon\sc(\Ri^\eps_\beta(\s))\to\Ri^L_{6\beta}(\G)$ such that Diagram~\ref{eq:rips_shadow_diagram} of geometric realizations commutes up to homotopy.

Recall that the vertex set of $\sc(\Ri^\eps_\beta(\s))$ is $\s\sqcup \mathcal{I}$, where $\mathcal{I}$ denotes the set of new vertices due to transverse intersections of edges of $\Ri^\eps_\beta(\s)$ in the shadow.

Define the vertex map $\Psi'\colon \s\to \G$ so that $\Psi'|_\s\coloneqq\Psi$. 
For any $I\in \mathcal{I}$, there must exist $1$-simplices $\sigma=[AB]$ and $\tau=[CD]$ of $\Ri^\varepsilon_{\beta}(\s)$ so that $I$ lies on the intersection of the two Euclidean segments $AB$ and $CD$. 
Define $\Psi'(P)\coloneqq\Psi(A)$; here the choice of $A$ is arbitrary.

Now, we show that the map extends to a simplicial map.
Let $\sigma=[U,V]\in\sc(\Ri^\eps_\beta(\s))$ be a shadow $1$-simplex to show that $[\Psi'(U), \Psi'(V)]$ is a simplex of $\Ri^L_{6\beta}(\G)$. 
If both $U,V\in \s$, then there is nothing to show since $\Psi'|_\s=\Psi$ is already a simplicial map. 

Otherwise, by the construction of the shadow complex, there exists (possibly degenerate) $[ABC]\in\Ri^\eps_\beta(\s)$ with $\max\{d^\eps_\s(A,U),d^\eps_\s(B,V)\}\leq3\beta$
by Corollary~\ref{cor:intersection}.
Since $d_\s^\eps(A,B)<\beta$, the triangle inequality implies $d^\eps_\s(U,V)<4\beta$.
Following the same (injectivity) argument given in the proof of Theorem~\ref{thm:hom-eq}, we get $d_\G(\Psi'(U), \Psi'(V))<6\beta$ using \eqref{eq:diam-estimate-2}.
So, $\Psi'$ is a simplicial map.

Since $\Psi'|_\s=\Psi$ and the restriction of shadow projection $p|_\s$ is the identity on $\s$, the triangle in the diagram of realizations commutes up to homotopy.
Using the fact that $6\beta<\ell(\G)/3$ and following the injectivity argument used in the proof of Theorem~\ref{thm:hom-eq}, we conclude that $\Psi$ induces injective homomorphisms on homotopy groups. 
Since the diagram commutes up to homotopy, $p$ must also induce surjections on $\pi_k$ for any $k\geq0$.
\end{proof}

We finally prove our main geometric graph reconstruction result.
\geom
\begin{proof}
We note that the above conditions satisfy the conditions of Theorem~\ref{thm:hom-eq}. Consequently, we already have $\Ri_\beta^\eps(\s)\simeq\G$.
Since the higher homotopy groups of $\Ri_\beta^\eps(\G)$ (being homotopy equivalent to $\G$) and $\sh(\Ri_\beta^\eps(\s))$ are trivial, Lemma~\ref{lem:surjection} and Lemma~\ref{lem:injection} together with Whitehead's theorem imply their homotopy equivalence.
Hence, $\sh(\Ri_\beta^\eps)\simeq\G$.

For any abstract simplicial complex $\K$ with Euclidean vertices, it holds that $d_H(\sh(\K), \K^{(0)})\leq\sup\left\{\overline{AB}\colon [AB]\in\K\right\}$.
In our case, it implies from Proposition~\ref{prop:d^esp-d^L-estimate} that \[d_H(\sh(\Ri_\beta^\eps(\s)), \s)
\leq\sup\left\{\overline{AB}\colon [AB]\in\Ri_\beta^\eps(\s)\right\}
\leq\sup\left\{d^\eps_S(A,B)\colon [AB]\in\Ri_\beta^\eps(\s)\right\}\leq\beta.\]
Since $d_H(\G,\s)<\frac{1}{2}\xi\eps$, from the triangle inequality we finally conclude  $d_H(\sh(\Ri_\beta^\eps(\s)), \G)\leq\left(\beta+\frac{1}{2}\xi\eps\right)$ as claimed.
\end{proof}

The following insightful corollary, gracefully suggested by the anonymous referee, follows immediately from the above theorem.
\begin{corollary}[$\tfrac{1}{2}\xi\varepsilon$-close homotopy equivalence]
\textit{Under the hypotheses of Theorem~\ref{thm:geom}, there exist continuous maps
\[
  f \colon \G \;\longrightarrow\; \sh(\mathcal R^\varepsilon_\beta(\s)),
  \qquad
  h \colon \sh(\mathcal R^\varepsilon_\beta(\s)) \;\longrightarrow\; \G
\]
such that $h \circ f \simeq \mathrm{Id}_\G$ and $f \circ h \simeq
\mathrm{Id}_{\sh(\mathcal R^\varepsilon_\beta(\s))}$, and moreover
\[
  \max\left\{\sup_{g \,\in\, \G} \|f(g) - g\|,
  \sup_{x \,\in\, \sh(\mathcal R^\varepsilon_\beta(\s))} \|h(x) - x\|
  \;\right\}<\; \tfrac{1}{2}\xi\varepsilon.
\]}
\end{corollary} 
The maps $f$ and $h$ are the nearest-point correspondences $\Phi\lvert_\G$ and $\Psi'$
already constructed in the proofs of Theorem~\ref{thm:hom-eq} and Lemma~\ref{lem:injection}; the displacement
bounds follow from their construction, with the linear extension to the shadow
handled by a convexity argument. The bound $\tfrac{1}{2}\xi\varepsilon$ is the same
constant that controls the Hausdorff proximity $d_H(\G, \s) < \tfrac{1}{2}\xi\varepsilon$
of the sample to the graph, which is natural.
 
As the referee noted, this is strictly stronger than stating homotopy equivalence and
Hausdorff closeness separately: two spaces may be homotopy equivalent and Hausdorff
close without admitting homotopy inverse maps that are pointwise close to the identity,
as illustrated by the referee's own Figure~\ref{fig:referee}. 
The corollary rules out such situations and
provides a more faithful geometric-topological correspondence between $\G$ and its
reconstruction $\sh(\mathcal R^\varepsilon_\beta(\s))$.

\begin{figure}[h]
\centering
\begin{tikzpicture}[line join=round, line cap=round, scale=0.8]
 
  \begin{scope}[xshift=-2.85cm]
    \fill[pattern=north east lines, pattern color=black, even odd rule]
      (0,0) circle (1.85)
      (0,0) circle (0.95);
    \fill[white] (0.94, -0.057) rectangle (1.86, 0.057);
    \draw[black, line width=0.7pt]
      (1.70:1.85) arc[start angle=1.70, end angle=358.30, radius=1.85];
    \draw[black, line width=0.7pt]
      (3.32:0.95) arc[start angle=3.32, end angle=356.68, radius=0.95];
    \draw[black, line width=0.7pt] (0.95, 0.057) -- (1.85, 0.057);
    \draw[black, line width=0.7pt] (0.95,-0.057) -- (1.85,-0.057);
    \fill[white] (-1.4, 0) circle (0.18);
    \draw[black, line width=0.5pt] (-1.4, 0) circle (0.18);
    \node[font=\small\itshape] at (0, -2.2) {$C$-shaped cut-washer with hole};
  \end{scope}
 
  \begin{scope}[xshift=2.85cm]
    \fill[pattern=north east lines, pattern color=black, even odd rule]
      (0,0) circle (1.85)
      (0,0) circle (0.95);
    \draw[black, line width=0.7pt] (0,0) circle (1.85);
    \draw[black, line width=0.7pt] (0,0) circle (0.95);
    \node[font=\small\itshape] at (0, -2.2) {round annulus};
  \end{scope}
\end{tikzpicture}
\caption{Two sets with the same homotopy type ($\simeq \mathbb S^1$) that are Hausdorff close, but are \textit{not} $\varepsilon$-close homotopy equivalent.\label{fig:referee}}
\end{figure}

\begin{figure}[thb]
\centering
\includegraphics[width=0.45\linewidth]{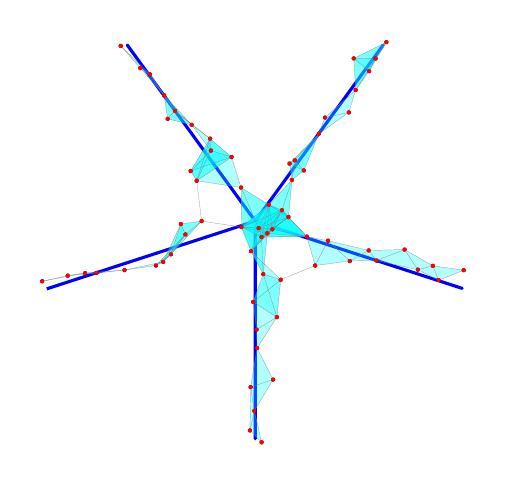}
\includegraphics[width=0.45\linewidth]{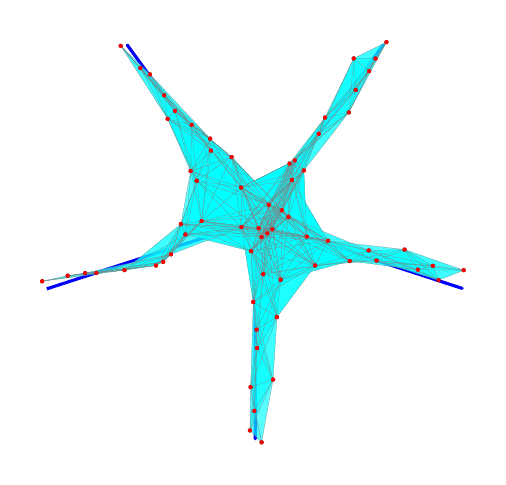}
\includegraphics[width=0.45\linewidth]{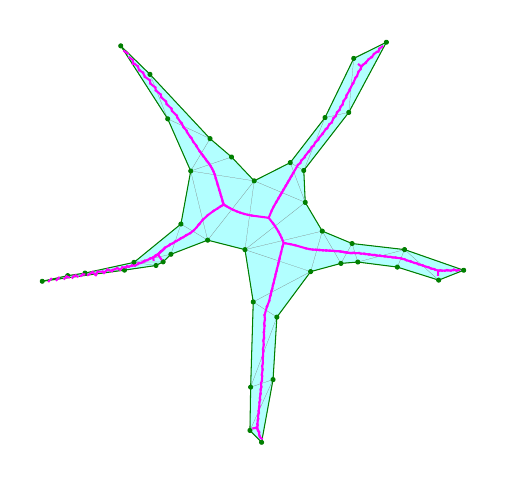}
\includegraphics[width=0.45\linewidth]{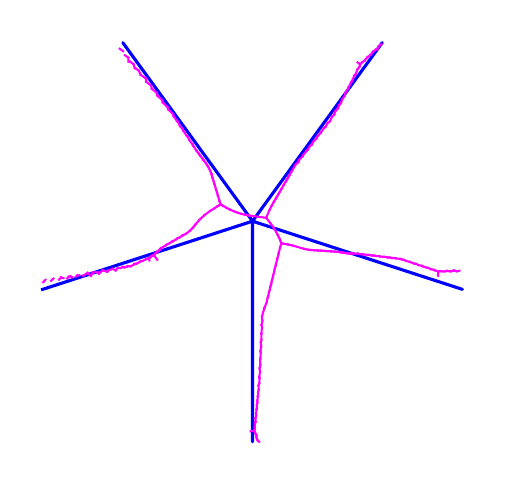}
\caption{Reconstruction of a $5$--pronged graph from a Hausdorff close sample via the shadow $\sh(\Ri_\beta^\eps(\s))$. The top left figure shows a shadow of the Euclidean Rips complex, which is topologically inaccurate (projected edges in grey and faces in light blue). The top right figure shows  $\sh(\Ri_\beta^\eps(\s))$, which reflects the correct homotopy type (Theorem \ref{thm:geom}). Further homotopy-equivalent simplifications of $\sh(\Ri_\beta^\eps(\s))$ are shown in the bottom-left and bottom-right figures. The bottom left shows the planar triangulation of the shadow $\sh(\Ri_\beta^\eps(\s))$ with marked green boundary. The purple geometric graph shows an approximation of the medial axis.}
\label{fig:5-prong-shadow-MA}
\end{figure}

\begin{figure}[htb]
\centering
\includegraphics[width=0.45\linewidth]{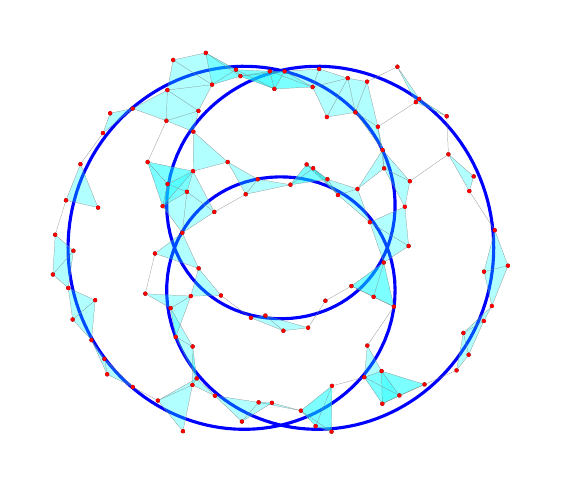}
\includegraphics[width=0.45\linewidth]{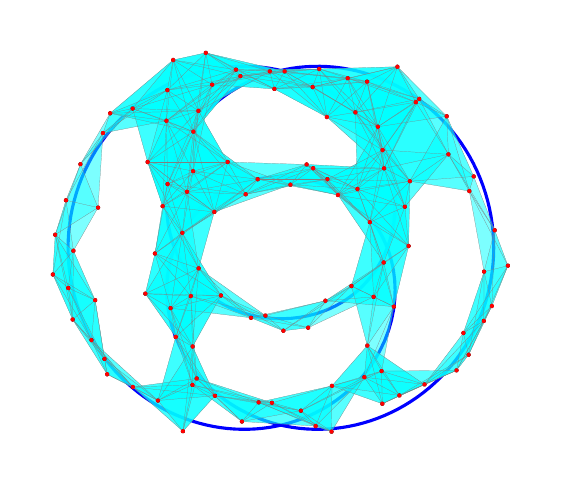}
\includegraphics[width=0.45\linewidth]{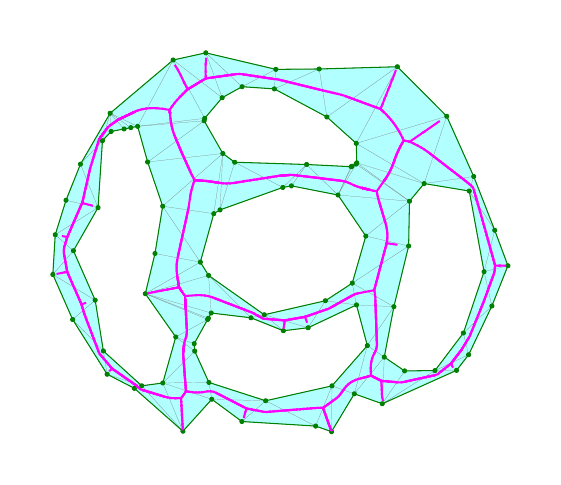}
\includegraphics[width=0.45\linewidth]{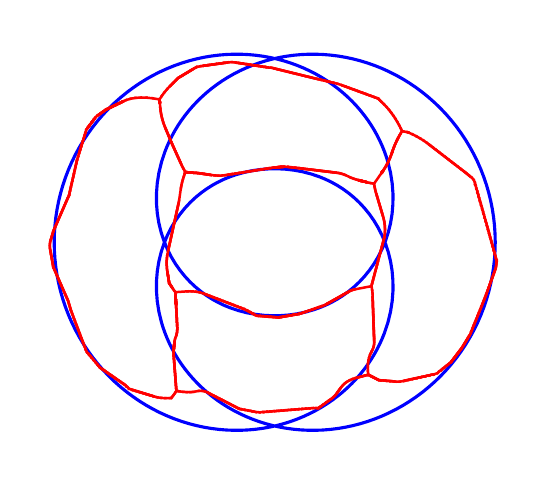}
\caption{Similar to Figure~\ref{fig:5-prong-shadow-MA}, here the geometric graph under reconstruction is a closed curve with no leaves. The [BOTTOM LEFT] panel shows the entire medial axis of the shadow complex $\sh$ (purple). The [BOTTOM RIGHT] panel displays a pruned version of the medial axis (red); see \cite{Attali2004Survey} for pruning strategies for medial axes.}
\label{fig:torus23-shadow-MA}
\end{figure}

\begin{remark}
In many settings, it is desirable to recover a $1$--dimensional proxy for the embedded graph $\G\subset\R^2$.
Since the shadow complex $\mathscr{Sh}=\sh(\Ri_\beta^\eps(\s))$ is a planar polygon, a natural choice for this proxy is 
the medial axis of the shadow; see \cite{Attali2004Survey} for a definition. 
In the planar case, this medial axis is itself a geometric graph, as shown in \cite{Choi1997}. 
In certain desirable cases, the shadow $\sh$ may be homotopy equivalent to its medial axis.
In such cases, the medial axis of $\sh$ will be a geometric graph homotopy equivalent to $\G$; 
see Figures~\ref{fig:5-prong-shadow-MA} and \ref{fig:torus23-shadow-MA} for an illustration.
\end{remark}


\section{Discussions}
The current work successfully provides guarantees for a homotopy-type recovery of an embedded metric graph in $\R^N$ from the Vietoris-Rips complexes of a Hausdorff-close Euclidean sample. 
Moreover, we prove the $\pi_1$-isomorphism of the natural shadow projection of the path-based Vietoris-Rips complexes of a sample lying in a Hausdorff proximity of a planar graph.
Since the homotopy reconstruction works in any host dimension $N\geq2$, the immediate next step is to consider three-dimensional graphs for geometric reconstruction.
 The difficulty lies in the elusive nature of the shadow beyond the plane; the shadow projection of (even) Euclidean Vietoris--Rips is not fully known to induce $\pi_1$-isomorphism in $\R^3$ \cite{adamaszek_homotopy_2017}. Nonetheless, the exploration remains very relevant to practical applications of Euclidean graph reconstruction.
The study sparks several interesting future research directions. 
Euclidean graphs are the simplest, albeit interesting, class of geodesic spaces one can consider. 
It is reasonable to believe that the geometric recovery of more general geodesic spaces---such as bounded curvature spaces considered by \cite{MajhiStability}---can similarly be approached using the shadow of homotopy equivalent Vietoris-Rips complexes. 

\subsection*{Acknowledgments}
Author SM would like to thank his middle school math teacher, Mr. Satyajit Das, for teaching him the intricacies of plane trigonometry using innovative methodologies and with utmost patience.

\begin{appendices}

\section{Additional Results and  Proofs}

\begin{proof}[Proof of Proposition~\ref{prop:radius}]
In the proof of Proposition~\ref{prop:center}, we saw that $c(A')$ is the (geodesic) midpoint of the two farthest points of $\mathcal A'$. 
Letting $r=\frac{1}{2}\diam_\G(\mathcal A)$, we note from Proposition~\ref{prop:center} that $\mathcal A'\subset \mathcal A\subset\overline{\B_\G(c(\mathcal A),r)}$. 
Also, the metric ball $\overline{\B_\G(c(\mathcal A),r)}$ is a geodesically convex subset of $\G$, due to the fact that $r<\ell(\G)/3$. 
So, we have that $c(\mathcal A')\in\overline{\B_\G(c(\mathcal A),r)}$. Hence, $d_\G(c(\mathcal A'),c(\mathcal A))\leq r=\frac{1}{2}\diam_\G(\mathcal A)$.
\end{proof}

\begin{proof}[Proof of Proposition~\ref{prop:delta_path}]
We consider the path $\gamma\colon[0,1]\to\R^N$ joining $a$ and $b$ by concatenating three pieces: the segment $[a,A]$ (of length $\leq\alpha$) with an $\eps$--path connecting $A$ and $B$ (of length $d^\eps_\s(A, B)$), and the segment $[B,b)]$ (of length $\leq \alpha$).

Since $\alpha<\eps/2$, it's not difficult to see that the image of $\gamma\subset \G^\eps$. 
This implies that $d_{\G^\eps}(a, b)\leq L(\gamma)\leq[d^\eps_\s(A,B)+2\alpha]$.
Since $d_\G(a,b)\geq R$, from the definition of restricted distortion, we now get
\[
d_\G(a,b)\leq\delta^\eps_R(\G)d_{\G^\eps}(a,b)\leq\delta^\eps_R(\G)\left[d^\eps_\s(A,B)+2\alpha\right].
\]
\end{proof}

\begin{proposition}[Path-connectedness]\label{prop:path-connected} 
Let $(\G,d_\G)$ be a path-connected graph embedded in $\R^N$ and $\s\subset\R^N$ with $d_H(\G,\s)<\alpha$. 
Then the geometric complex of $\Ri^\eps_{\beta}(\s)$ is
path-connected for any $\beta\geq\eps>2\alpha$.
\end{proposition}

\begin{proof}
Let us denote by $\Ri_\beta(\G)$ and $\Ri_\beta(\s)$ the Euclidean Vietoris--Rips complexes of $\G$ and $\s$, respectively.

Let $A,B\in \s$ be arbitrary points.
Then, there exist points $a,b\in\G$ such that $\max\{\norm{a-A},\norm{b-B}<\alpha$. 
Since $\G$ is assumed to be path-connected, so is $\Ri_{\eps-2\alpha}(\G)$. 
As a result, there exists a sequence
$\{x_i\}_{i=0}^{m+1}\subset\G$ forming a path in $\Ri_{\eps-2\alpha}(\G)$ joining $a$ and $b$. 
In other words, $x_0=a$, $x_{m+1}=b$, and $\norm{x_i-x_{i+1}}<\eps-2\alpha$ for $0\leq i\leq m$. 
There is also a corresponding sequence $\{X_i\}_{i=0}^{m+1}\subset\s$ such that $X_0=A$, $X_{m+1}=B$, and
$\norm{X_i-x_i}<\alpha$ for all $i$. 
We note that
\[
\norm{X_i-X_{i+1}}\leq\norm{x_i-x_{i+1}}+2\alpha<(\eps-2\alpha)+2\alpha=\eps.
\]
Thus, the sequence $\{X_i\}$ produces a path in $\Ri_{\beta+2\eps}(\s)$ joining $A$ and $B$. 
So, the geometric complex of $\Ri_{\beta+2\eps}(\s)$ is path-connected.
Since $\beta\geq\eps$, we have
the inclusion $\Ri_{\eps}(\s)\hookrightarrow\Ri^\eps_{\beta}(\s)$.
We conclude that $\Ri^\eps_\beta(\s)$ is path-connected.
\end{proof}

\begin{lemma}[Commuting Diagram \cite{majhi2023vietoris}]\label{lem:barycenter-map-lemma} Let $\K$ be a pure $m$--complex and $\L$ a flag complex. Let
$f\colon\K\to\L$ and $g\colon\sd\K\to\L$ be simplicial maps such that 
\begin{enumerate}
\item $g(v)=f(v)$ for every vertex $v$ of $\K$,
\item $f(\sigma)\cup g(\widehat{\sigma})$ is a simplex of $\L$ whenever
$\sigma$ is a simplex of $\K$.
\end{enumerate}
Then, the following diagram commutes up to contiguity: 
\begin{equation*}
\begin{tikzpicture} [baseline=(current  bounding  box.center)]
\node (k1) at (-2,-2) {$\sd{\K}$};
\node (k2) at (2,-2) {$\K$};
\node (k3) at (0,0) {$\L$};
\draw[map] (k2) to node[auto,swap] {${\sd}$} (k1);
\draw[map,swap] (k1) to node[auto,swap] {$g$} (k3);
\draw[map,swap] (k2) to node[auto] {$f$} (k3);
\end{tikzpicture}
\end{equation*}
where $\sd^{-1}$ is the linear homeomorphism sending each vertex of $\sd{\K}$ to the corresponding point of $\K$.
\end{lemma}

\begin{proof}[Proof of Proposition~\ref{prop:shadow-radius}]
Let $\delta$ be a small positive number such that 
\begin{align}\label{eq:delta}
\text{(i) }\delta\leq\Theta,
\text{ (ii) }(1+\delta)\leq\frac{1}{\sqrt{1-\Theta}}, \text{ (iii) }2\delta+\Theta\leq 1,
\text{ (iv) }(1+\delta)(\Theta+\delta)\leq\frac{1+\Theta}{2}.
\end{align}
Such $\delta$ can be chosen since $1/2\leq\Theta<1$.

Next, we pick sufficiently small $r>0$ such that: 
\begin{itemize}
\item $\|a-b\|\leq r$ implies $d_\G(a,b)<\ell(\G)/3$ so that there exists a unique geodesic joining them;
\item due to the assumption that each edge is $C^1$ (hence locally Lipschitz), both $a,b$ lying on the same edge of $\G$ with $\norm{a-b}\leq 2r$ implies 
\begin{equation}\label{eq:C^1-appx}
\text{(i) }d_\G(a,b)\leq (1+\delta)\overline{ab}~\text{ and (ii) }
ab^{(1-\Theta)d_\G(a,b)}\text{ contains the geodesic joining }a,b.
\end{equation} 
\item if $a,b\in\G$ with $\|a-b\|\leq r$ lie on two distinct edges $e_a,e_b$ of $\G$ incident to a vertex $v$, respectively, then
\begin{equation}\label{eq:C^1-angle}
\left|\cos^2{\frac{\angle{avb}}{2}}-\cos^2{\frac{\angle{e_ae_b}}{2}}\right|\leq\delta.
\end{equation} 
\item if $\|a-b\|\leq r$ and $q\in \G\cap ab^\eps$ for some $0\leq\eps\leq\frac{1}{2}[1-\Theta]^{3/2}r$, then one of the following must happen:
\begin{enumerate}[(A)]
\item at least one of the pairs $(a,q)$ or $(b,q)$ belong to the same edge of $\G$ as Figure~\ref{fig:shadow-rad-cases}(A);
\item $a,b$ belong to the same edge $e_a$ but $q$ belongs to a different edge $e_q$ Figure~\ref{fig:shadow-rad-cases}(B);
\item $a, b, q$ lie on three distinct edges of $\G$ incident to a common vertex $v\in\mathscr{V}(\G)$ as Figure~\ref{fig:shadow-rad-cases}(C). 
\end{enumerate}
\end{itemize}

\begin{figure}[hbt]
\centering
\begin{subfigure}[t]{0.3\textwidth}
\centering
\begin{tikzpicture}[scale=1]
\draw[thick, blue] (-2, 0) .. controls (-2,-1.5) .. node[auto,swap] {$e_a$} (0,-3) 
coordinate[pos=0.10] (q)
coordinate[pos=0.25] (a)
coordinate[pos=0.95] (b);    
\draw[dashed, gray] (a) -- (b);    
\fill[blue] (q) circle (2pt) node[blue, anchor=east] {$q$};
\fill[blue] (a) circle (2pt) node[blue, anchor=east] {$a$};
\fill[blue] (b) circle (2pt) node[blue, anchor=east] {$b$};
\end{tikzpicture}
\caption{Case A}
\end{subfigure}
~
\begin{subfigure}[t]{0.3\textwidth}
\centering
\begin{tikzpicture}[scale=1]
\draw[thick, blue, name path=1] (-2, 0) .. controls (-2,-1.5) .. node[auto,swap] {$e_a$} (0,-3) coordinate (d) node[anchor=west]  {$v$}
coordinate[pos=0.1] (a)
coordinate[pos=0.9] (b)
coordinate[pos=0.7] (q1);    
\draw[dashed, gray] (a) -- (b);
\draw[thick, blue] (0.5, 0.5)  .. controls (0,0) .. node[auto,swap] {$e_b$} (0,-3) coordinate[pos=0.7] (q) ; 
\fill (q1) circle (2pt);
\fill[blue] (a) circle (2pt) node[anchor=east] {$a$};
\fill[blue] (b) circle (2pt) node[anchor=east] {$b$};
\fill[blue] (q) circle (2pt) node[anchor=west] {$q$};
\draw[dashed,gray] (q) -- node[auto,black,swap] {$\eps$} (q1) node[black,anchor=north] {$p$};
\end{tikzpicture}
\caption{Case B}
\end{subfigure}
~
\begin{subfigure}[t]{0.3\textwidth}
\centering
\begin{tikzpicture}[scale=0.7]
\draw[thick, blue] (-2, 0) .. controls (-2,-1.5) .. node[auto,swap] {$e_a$} (0,-3) coordinate (d) node[anchor=west]  {$v$} .. controls (1,-1) .. node[auto,swap] {$e_b$} (3, 1);    
\draw[dashed, gray] (-2, 0) -- (0,-3) -- (3, 1);    
\draw[gray, dashed, name path=1] (-2, 0) coordinate (a) node[blue, anchor=east] {$a$} -- (3, 1) coordinate (b) node[blue, anchor=west] {$b$}; 
\draw[thick, blue] (1, 2) coordinate (c) node[anchor=south] {$q$} .. controls (0,0) ..  (0,-3); \draw[dashed, gray, name path=2] (0, -3) --  (1,2);   
\fill[blue] (0,-3) circle (2pt);
\fill[blue] (-2,0) circle (2pt);
\fill[blue] (1,2) circle (2pt);
\fill[blue] (3,1) circle (2pt);
\fill[fill=black, name intersections={of=1 and 2}] (intersection-1) circle (2pt) node[anchor=south west] {$p$};
\end{tikzpicture}
\caption{Case C}
\end{subfigure}
\caption{The Cases for Proposition~\ref{prop:shadow-radius}}
\label{fig:shadow-rad-cases}
\end{figure}

In the rest of the proof, we show that the property ($\star$) of Definition~\ref{def:shadow} holds in each of the above two cases.

\subsection*{Case (A)}
In this case, if $q$ lies on the geodesic joining $a$ and $b$, then we trivially get
\[\min\{d_\G(a, q), d_\G(b, q)\}\leq\frac{1+\Theta}{2}d_\G(a,b).\]
Otherwise, if $q$ lies outside the geodesic as in Figure~\ref{fig:shadow-rad-cases}(A), then there exists $q'\in ab$ with $\|q-q'\|<\eps$.
Due to~[\ref{eq:C^1-appx}(ii)] and the triangle inequality, we have $\max\{\norm{q-a}, \norm{q-b}\}\leq((1-\Theta)d_\G(a,b)+\eps)\leq(d_\G(a,b)/2+\eps)$ since $\frac{1}{2}\leq\Theta<1$.

Since $d_\G(a,b)/2+\eps\leq d_\G(a,b)/2+(1-\Theta)^{3/2}r/2\leq (1+\delta)r/2+r/2=(2+\delta)r/2\leq2r$, now [\ref{eq:C^1-appx}(i)] implies 
\begin{align*}
\min\{d_\G(a,q), d_\G(b,q)\}&\leq(1+\delta)(d_\G(a,b)/2+\eps)\\
&=\frac{1+\delta}{2}d_\G(a,b))+(1+\delta)\eps \\
&\leq\frac{1+\Theta}{2}d_\G(a,b)+\frac{\eps}{\sqrt{1-\Theta}},\text{ due to~[\ref{eq:delta}(i)] and due to~[\ref{eq:delta}(ii)], resp.}\\
&\leq\frac{1+\Theta}{2}d_\G(a,b)+\frac{\eps}{1-\Theta}.
\end{align*}

\subsection*{Case (B)} 
In this case, we assume $a,b\in e_a$ and $q\in e_b$.
Due to~[\ref{eq:C^1-appx}(ii)], there exists a point $p$ on the geodesic joining $a,b$ such that $\overline{pq}\leq (1-\Theta)d_\G(a,b)+\eps$.
We note that
\[
\frac{\overline{pv}}{\sin{\angle{pqv}}}
=\frac{\overline{qv}}{\sin{\angle{vpq}}}
=\frac{\overline{pq}}{\sin{\angle{pvq}}}.
\]
So, 
\begin{align*}
\overline{pv}+\overline{qv} &=\frac{\sin{\angle{pqv}}+\sin{\angle{vpq}}}{\sin{\angle{pvq}}}\overline{pq} \leq\frac{\sin{\angle{pqv}}+\sin{\angle{vpq}}}{\sin{\angle{pvq}}}((1-\Theta)d_\G(a,b)+\eps)\\
&=\frac{2\sin{\frac{\angle{pqv}+\angle{vpq}}{2}}\cos{\frac{\angle{pqv}-\angle{vpq}}{2}}}{2\sin{\frac{\angle{pvq}}{2}}\cos{\frac{\angle{pvq}}{2}}}((1-\Theta)d_\G(a,b)+\eps) \\
&=\frac{\cos{\frac{\angle{pvq}}{2}}\cos{\frac{\angle{pqv}-\angle{vpq}}{2}}}{\sin{\frac{\angle{pvq}}{2}}\cos{\frac{\angle{pvq}}{2}}}((1-\Theta)d_\G(a,b)+\eps)\\
&\leq\frac{(1-\Theta)d_\G(a,b)+\eps}{\sin{\frac{\angle{pvq}}{2}}}
=\frac{(1-\Theta)d_\G(a,b)+\eps}{\sqrt{1-\cos^2{\frac{\angle{pvq}}{2}}}}
\leq\frac{(1-\Theta)d_\G(a,b)+\eps}{\sqrt{1-\cos^2{\frac{\angle{e_ae_b}}{2}}-\delta}},\text{using \eqref{eq:C^1-angle}}\\
&\leq\frac{(1-\Theta)d_\G(a,b)+\eps}{\sqrt{1-\cos^2{\frac{\angle{e_ae_b}}{2}}}},\text{using \eqref{eq:C^1-angle}}\\
&\leq\frac{(1-\Theta)d_\G(a,b)+\eps}{\sqrt{1-\Theta}}.
\end{align*}
From~[\ref{eq:C^1-appx}(i)], we get using~[\ref{eq:delta}(ii)]
\[
d_\G(p,q)\leq\frac{(1+\delta)[(1-\Theta)d_\G(a,b)+\eps]}{\sqrt{1-\Theta}}
\leq\frac{(1+\delta)(1-\Theta)}{2\sqrt{1-\Theta}}d_\G(a,b)+\frac{(1+\delta)}{\sqrt{1-\Theta}}\eps
\leq\frac{1+\delta}{2}d_\G(a,b)+\frac{\eps}{1-\Theta}.
\]
Since $q$ lies on the geodesic joining $a$ and $b$, conclude \[
\min\{d_\G(a,q), d_\G(b,q)\}
\leq
\frac{1+\delta}{2}d_\G(a,b)+\frac{\eps}{1-\Theta}
<\frac{1+\Theta}{2}d_\G(a,b)+\frac{\eps}{1-\Theta}.
\]
The last inequality is due to~[\ref{eq:delta}(i)].

\subsection*{Case (C)}
In this case, we have, without any loss of generality, from Proposition~\ref{prop:optimization} that 
\begin{align}
\overline{av}+\overline{pv}
&\leq\cos^2{\left(\tfrac{\min\{\phi,\varphi\}}{2}\right)}(\overline{av}+\overline{bv})\nonumber\\
&\leq\left[\cos^2{\left(\tfrac{\min\{\angle{e_ae_b},\angle{e_be_c}\}}{2}\right)}+\delta\right](\overline{av}+\overline{bv}),~\text{ due to \eqref{eq:C^1-angle}}\nonumber\\
&\leq(\Theta+\delta)(\overline{av}+\overline{bv}),~\text{ from the definition of}~\Theta\nonumber\\
&\leq(\Theta+\delta)[d_\G(a,v)+d_\G(v,b)]
=(\Theta+\delta)d_\G(a,b),~\text{ since }d_\G(a,b)<\ell(\G)/3
\label{eq:estimate-1}.
\end{align}
On the other hand, since $q\in ab^\eps$, again from Proposition~\ref{prop:optimization} we get 
\begin{equation}\label{eq:estimate-2}
\overline{pq}
\leq\frac{\eps}{\sqrt{2(1-\cos^2{(\min\{\angle{e_ae_b}, \angle{e_b,e_c}\}/2)})}}
\leq\frac{\eps}{\sqrt{2(1-\Theta-\delta)}}
\leq\frac{\eps}{\sqrt{1-\Theta}}.    
\end{equation}
The last inequality is due to~[\ref{eq:delta}(iii)].
From~\eqref{eq:estimate-1} and~\eqref{eq:estimate-2}, we now get
\begin{align*}
\overline{vq}\leq(\overline{av}+\overline{vp})+\overline{pq}
\leq(\Theta+\delta)d_\G(a,b)+\tfrac{\eps}{\sqrt{1-\Theta}}\leq\left[(\Theta+\delta)+\tfrac{(1-\Theta)^{3/2}}{2\sqrt{1-\Theta}}\right]r
=\tfrac{1}{2}(2\delta+\Theta+1)r\leq r.
\end{align*}
Again, the last inequality is due to~[\ref{eq:delta}(iii)].

As a result, \eqref{eq:C^1-appx} implies $d_\G(v,q)\leq(1+\delta)\overline{qv}$.
Putting everything together, we finally get
\begin{align*}
d_\G(a, q) &= d_\G(a,v)+d_\G(v, q),\text{ since }d_\G(a,q)<\ell(\G)/3\\ 
&\leq (1+\delta)(\overline{av}+\overline{vq}),~\text{due to \eqref{eq:C^1-appx}}\\ 
&=(1+\delta)(\overline{av}+[\overline{vp}+\overline{pq}])
=(1+\delta)([\overline{av}+\overline{vp}]+\overline{pq})\\
&\leq(1+\delta)\left[(\Theta+\delta)d_\G(a,b)+\tfrac{\eps}{\sqrt{1-\Theta}}\right],~\text{ adding \eqref{eq:estimate-1} and \eqref{eq:estimate-2} }\\
&\leq(1+\delta)(\Theta+\delta)d_\G(a,b)+\tfrac{(1+\delta)}{\sqrt{1-\Theta}}\eps\\
&\leq\frac{1+\Theta}{2} d_\G(a,b)+\frac{\eps}{1-\Theta},\text{ from~[\ref{eq:delta}(iv)] and~[\ref{eq:delta}(ii)], respectively}.
\end{align*}
Hence the proof.
\end{proof}

\begin{proposition}\label{prop:optimization}
Let $\triangle avp$ and $\triangle bvp$ be two triangles as shown in Figure~\ref{fig:shadow-rad} with $\angle{avp}=\phi$, $\angle{bvp}=\varphi$, and $\overline{av}+\overline{vb}=r$. 
Then, 
\[
\min\left\{\overline{av}+\overline{pv}, \overline{bv}+\overline{pv}\right\}
\leq\cos^2{\left(\frac{\min\{\phi,\varphi\}}{2}\right)}r.
\]
Moreover, if $q\in ab^\eps$, then 
$\overline{pq}\leq\eps/\sqrt{2\left[1-\cos^2{\frac{\min\{\phi,\varphi\}}{2}}\right]}$.
\end{proposition}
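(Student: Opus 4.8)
The plan is to prove the two assertions separately, by plane trigonometry in the triangles $\triangle avp,\triangle bvp$, which share the side $vp$; in the configuration of Figure~\ref{fig:shadow-rad} the point $p$ lies on the segment $ab$ with $a,b$ on opposite sides of line $vp$, so that $\angle avb=\phi+\varphi\in(0,\pi)$ and $\angle apv+\angle bpv=\pi$.

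\textbf{The length bound.} I would place $v$ at the origin with ray $vp$ along the positive $x$-axis, $a$ on it at angle $\phi$ and $b$ at angle $-\varphi$; intersecting segment $ab$ with the $x$-axis and setting $x=\overline{av}$, $y=\overline{bv}$ gives $\overline{pv}=\dfrac{xy\sin(\phi+\varphi)}{x\sin\phi+y\sin\varphi}$. Normalising $x+y=1$, the claim becomes
\[
g(x):=\min\{x,1-x\}+\frac{x(1-x)\sin(\phi+\varphi)}{x\sin\phi+(1-x)\sin\varphi}\le\cos^2\!\Bigl(\tfrac{\min\{\phi,\varphi\}}{2}\Bigr),\qquad x\in(0,1).
\]
Since $g$ is invariant under $(x,\phi,\varphi)\mapsto(1-x,\varphi,\phi)$, I may assume $\phi\le\varphi$, whence (using $\phi+\varphi<\pi$) $\sin\phi\le\sin\varphi$. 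The heart of the matter is that $g$ is maximised at the symmetric position $x=\tfrac12$ (i.e.\ $\overline{av}=\overline{bv}$). Writing $x=\tfrac12+\sigma$ and $\Sigma=\sin\phi+\sin\varphi$, $\Delta=\sin\varphi-\sin\phi\ge0$, $U=\sin(\phi+\varphi)$, a short computation gives
\[
\frac{x(1-x)}{x\sin\phi+(1-x)\sin\varphi}-\frac1{2\Sigma}=\frac{\sigma(\Delta-2\Sigma\sigma)}{\Sigma(\Sigma-2\sigma\Delta)},
\]
with positive denominator. For $\sigma\le0$ the right side is $\le0$, so $g(x)\le g(\tfrac12)$ on $(0,\tfrac12]$; for $\sigma\ge0$, the inequality $g(x)\le g(\tfrac12)$ (trivial when the quotient is non-positive) reduces to $U\Delta-\Sigma^2\le2\sigma\Sigma(U-\Delta)$, whose left side is $<0$ because $U\le\Sigma$ (from $\cos\le1$) and $\Delta<\Sigma$, and whose right side is $\ge0$ because $\sin(\phi+\varphi)+\sin\phi\ge\sin\varphi$, i.e.\ $\sin(\phi+\tfrac\varphi2)\ge\sin\tfrac\varphi2$, true as $\phi+\tfrac\varphi2\in[\tfrac\varphi2,\pi-\tfrac\varphi2]$. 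Finally, by product-to-sum,
\[
g(\tfrac12)=\frac{\sin\phi+\sin\varphi+\sin(\phi+\varphi)}{2(\sin\phi+\sin\varphi)}=\frac{\cos(\phi/2)\cos(\varphi/2)}{\cos((\varphi-\phi)/2)}\le\cos^2(\phi/2),
\]
the last step being $\cos(\varphi/2)\le\cos(\phi/2)\cos((\varphi-\phi)/2)=\tfrac12\bigl(\cos(\varphi/2)+\cos(\phi-\varphi/2)\bigr)$, i.e.\ $\cos(\varphi/2)\le\cos(\phi-\varphi/2)$, which holds since $|\phi-\tfrac\varphi2|\le\tfrac\varphi2\le\tfrac\pi2$. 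As $\phi=\min\{\phi,\varphi\}$, this proves the bound. I expect the maximisation step — reducing the optimisation over the position of $p$ to $x=\tfrac12$ — to be the main obstacle: although it collapses to the two elementary scalar inequalities above, spotting the substitution $x=\tfrac12+\sigma$ and tracking the sign of the quotient is where the work lies.

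\textbf{The distance bound.} Since $q\in ab^\eps$, the distance from $q$ to the line through $a,b$ is at most $\eps$. As $q,p,v$ are colinear, line $pq$ equals line $vp$, so dropping the perpendicular from $q$ onto line $ab$ gives $\overline{pq}\sin\angle apv\le\eps$. The angle-sum relations in $\triangle avp$ and $\triangle bvp$ force $\angle apv\in(\varphi,\pi-\phi)$, and since $\sin$ is concave on $[0,\pi]$ its minimum over $[\varphi,\pi-\phi]$ is attained at an endpoint, so $\sin\angle apv\ge\min\{\sin\phi,\sin\varphi\}$. With $\phi\le\varphi$ this equals $\sin(\min\{\phi,\varphi\})$, and since $\min\{\phi,\varphi\}<\tfrac\pi2$ (else $\phi+\varphi\ge\pi$) we get $\sin(\min\{\phi,\varphi\})\ge\sqrt2\,\sin\tfrac{\min\{\phi,\varphi\}}2$. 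Because $\sqrt{2(1-\cos^2(\theta/2))}=\sqrt2\,\sin(\theta/2)$, this yields $\overline{pq}\le\eps/\sqrt{2\bigl(1-\cos^2(\min\{\phi,\varphi\}/2)\bigr)}$.
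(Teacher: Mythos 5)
Your proof is correct, and it follows the same overall strategy as the paper's: with $\phi,\varphi$ and $\overline{av}+\overline{bv}=r$ fixed, show that $\min\{\overline{av}+\overline{pv},\,\overline{bv}+\overline{pv}\}$ is maximized at the symmetric position $\overline{av}=\overline{bv}$, evaluate the common value there in the closed form $\frac{\cos(\phi/2)\cos(\varphi/2)}{\cos((\varphi-\phi)/2)}\,r$, and bound $\overline{pq}$ by the perpendicular distance from $q$ to the line $ab$ divided by $\sin\angle apv$ with $\angle apv\in(\varphi,\pi-\phi)$. Where you genuinely differ is in how the maximization is carried out: the paper parametrizes by the angle $\theta=\angle apv$, splits the domain at the symmetric value, and sign-analyzes the explicit derivatives $x'(\theta)$ and $y'(\theta)$, whereas you parametrize by $x=\overline{av}$ and compare $g(x)$ to $g(\tfrac12)$ directly through the algebraic identity in $\sigma=x-\tfrac12$, reducing everything to the two elementary facts $U\Delta<\Sigma^2$ and $U\geq\Delta$. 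This buys a calculus-free argument and avoids the somewhat delicate verification that the bracketed trigonometric expressions in $x'(\theta)$ and $y'(\theta)$ have the claimed signs. A further small improvement: in the final step you prove the inequality $1/\sin\varphi\leq 1/\bigl(\sqrt{2}\,\sin(\varphi/2)\bigr)$ using $\min\{\phi,\varphi\}<\pi/2$, whereas the paper writes $\csc(\varphi)\,\eps=\eps/\sqrt{2[1-\cos^2(\varphi/2)]}$ as an identity, which it is not; it is an inequality in the needed direction exactly for the reason you give.
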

\begin{proof}[Proof of Proposition~\ref{prop:optimization}]
Without any loss of generality, we assume that 
$\phi\geq\varphi$.
Let $\theta\coloneqq\angle{apv}$ and 
$a(\theta)\coloneqq\overline{av}$, $b(\theta)\coloneqq \overline{bv}$, $p(\theta)\coloneqq\overline{pv}$.
We immediately note from the triangles that $\theta\in(\varphi, \pi-\phi)$.

Under the constraints that (i) $\phi,\varphi$ are fixed angles and (ii) the sum $a(\theta)+b(\theta)=r$ is constant, we find the maximum value of $\min\left\{a(\theta)+p(\theta),b(\theta)+p(\theta)\right\}$ for $\theta\in(\varphi, \pi-\phi)$.

From the smaller triangles in Figure~\ref{fig:shadow-rad}, we have
\begin{equation}\label{eq:a-p-b}
\frac{p(\theta)}{\sin{(\theta+\phi)}}=\frac{a(\theta)}{\sin{\theta}}\text{ and }
\frac{p(\theta)}{\sin{(\theta-\varphi)}}=\frac{b(\theta)}{\sin{\theta}}.
\end{equation}

\begin{figure}[hbt]
\centering
\begin{tikzpicture}[scale=1.2]
\draw[name path=1] (-2, 0) coordinate (a) node[anchor=east] {$a$} -- (3, 1) coordinate (b) node[anchor=west] {$b$}; 
\draw (-2, 0) -- node[auto,swap] {$a(\theta)$} (0,-3) coordinate (v) node[anchor=west]  {$v$} -- node[auto,swap] {$b(\theta)$} (3, 1);    
\draw[name path=2] (1.2, 2.5) coordinate (p) node[anchor=south] {$q$} --  (0,-3);   
\draw[name intersections={of=1 and 2}] (intersection-1) coordinate (q) node[anchor=north west] {$p$} -- node[auto,swap] {$p(\theta)$} (0,-3);   
\pic [draw, -, "\footnotesize{$\phi$}", angle eccentricity = 1.5] {angle = c--v--a};
\pic [draw, -, "\footnotesize{$\varphi$}", angle eccentricity = 1.5] {angle = b--v--c};
\pic [draw, -, "\footnotesize{$\theta$}", angle eccentricity = 1.5] {angle = a--q--v};
\pic [draw, -, "\footnotesize{$\alpha$}", angle eccentricity = 1.5] {angle = b--q--p};
\draw[dashed] (p) -- node[auto] {$\eps$} ($(a)!(p)!(b)$);
\end{tikzpicture}
\caption{}
\label{fig:shadow-rad}
\end{figure}

Adding them, we get
\begin{align*}
& p(\theta)\left[ \frac{\sin{\theta}}{\sin{(\theta+\phi)}} + \frac{\sin{\theta}}{\sin{(\theta-\varphi)}} \right]=a(\theta)+b(\theta)\coloneqq r \\
\implies &p(\theta)=\frac{r}{\sin{\theta}}\bigg/\left[ \frac{1}{\sin{(\theta+\phi)}} + \frac{1}{\sin{(\theta-\varphi)}} \right].
\end{align*}
Note from \eqref{eq:a-p-b} that $a(\theta)=b(\theta)$ implies $\theta=\frac{\pi}{2}-\frac{\phi-\varphi}{2}$. 
We divide the domain $(\varphi,\pi-\phi)$ of $\theta$ into $I_a\coloneqq\left(\varphi,\frac{\pi}{2}-\frac{\phi-\varphi}{2}\right]$ and $I_b\coloneqq\left(\frac{\pi}{2}-\frac{\phi-\varphi}{2}, \pi-\phi\right)$.

\subsection*{($\pmb{I_a}$)}
For $\theta\in I_a$, we maximize 
\[x(\theta)\coloneqq p(\theta)+a(\theta)=r\left[ \frac{1}{\sin{\theta}} + \frac{1}{\sin{(\theta+\phi)}} \right]\bigg/\left[ \frac{1}{\sin{(\theta+\phi)}} + \frac{1}{\sin{(\theta-\varphi)}} \right].\]
We argue that $x(\theta)$ is an increasing function by showing the positivity of its derivative:
\begin{align*}
x'(\theta)=\tfrac{1}{2}\cos{\tfrac{\phi}{2}}\sin{\tfrac{\varphi}{2}}\sec{\tfrac{\phi+\varphi}{2}}\csc^2{\theta}
\csc^2{\tfrac{\phi-\varphi+2\theta}{2}}
\big[1+\cos{(\varphi-2\theta)-\cos{(\phi-\varphi+2\theta)}-\cos{(\phi+2\theta)}}\big].
\end{align*}
The last term (under brackets) is always positive. 
Indeed, its derivative is $2\sin{(\phi-\varphi+2\theta)}+2\sin{(\phi+2\theta)}+2\sin{(\varphi-2\theta)}>0$ and its value at $\theta=0$ is $4\sin{\frac{\phi}{2}}\cos{\frac{\varphi}{2}}\sin{\frac{\phi-\varphi}{2}}>0$ since $\varphi/2<\pi/2$. 
So, we conclude that $x(\theta)$ is an increasing function of $\theta$.

\subsection*{($\pmb{I_b}$)}
For $\theta\in I_b$, we maximize 
\[y(\theta)\coloneqq p(\theta)+b(\theta)=r\left[ \frac{1}{\sin{\theta}} + \frac{1}{\sin{(\theta-\varphi)}} \right]\bigg/\left[ \frac{1}{\sin{(\theta+\phi)}} + \frac{1}{\sin{(\theta-\varphi)}} \right].\]
We argue that $y(\theta)$ is a decreasing function by showing the negativity of its derivative:
\begin{align*}
y'(\theta)=\tfrac{1}{2}\cos{\tfrac{\phi}{2}}\sin{\tfrac{\varphi}{2}}\sec{\tfrac{\phi+\varphi}{2}}\csc^2{\theta}
\csc^2{\tfrac{\phi-\varphi+2\theta}{2}}
\big[\cos{(\phi-\varphi+2\theta)}-\cos{(\phi+2\theta)}+\cos{(\varphi-2\theta)}-1\big].
\end{align*}
The last term (under brackets) is always negative. 
Indeed, $\theta\geq\frac{\pi}{2}-\frac{\phi-\varphi}{2}$ implies that $\cos{(\phi-\varphi+2\theta)}\leq\cos{(\phi+2\theta)}$ since the cosine function is increasing in the interval $[\pi, 2\pi]$.

As a consequence, for any $\theta\in(\varphi, \pi-\phi)$:
\begin{align*}
\min\{x(\theta), y(\theta)\} &\leq
x\left(\tfrac{\pi}{2}-\frac{\phi-\varphi}{2}\right)
=y\left(\tfrac{\pi}{2}-\frac{\phi-\varphi}{2}\right) \\
&=r\left[ \frac{1}{\cos{\frac{\phi-\varphi}{2}}} + \frac{1}{\cos{\frac{\phi+\varphi}{2}}} \right]\bigg/\frac{2}{\cos{\frac{\phi+\varphi}{2}}}\\
&=\frac{\cos{\frac{\phi+\varphi}{2}}+\cos{\frac{\phi-\varphi}{2}}}{2\cos{\frac{\phi-\varphi}{2}}}r
=\frac{2\cos{\frac{\phi}{2}}\cos{\frac{\varphi}{2}}}{2\cos{\frac{\phi-\varphi}{2}}}r\\
&=\frac{\cos{\frac{\phi}{2}}\cos{\frac{\varphi}{2}}}{\cos{\frac{\phi}{2}}\cos{\frac{\varphi}{2}}+\sin{\frac{\phi}{2}}\sin{\frac{\varphi}{2}}}r\\
&=\frac{1}{1+\tan{\frac{\phi}{2}}\tan{\frac{\varphi}{2}}}r\leq\frac{1}{1+\tan^2{\frac{\varphi}{2}}}r,\text{ since }\phi\geq\varphi\\
&=\cos^2\frac{\varphi}{2}r.
\end{align*}

Finally, if $q\in ab^\eps$, then the top-most triangle implies that $\overline{pq}\leq\csc{(\alpha)}\eps$, where is $\alpha<\pi/2$ is either $\theta$ or $(\pi-\theta)$.
Since $\theta\in(\varphi, \pi-\phi)$, we then have $\csc{\alpha}\leq\csc{\varphi}$. Therefore, $\overline{pq}\leq\csc{(\varphi)}\eps=\eps/\sqrt{2[1-\cos^2{(\varphi/2)}]}$.

This completes the proof.
\end{proof}

\begin{proof}[Proof of Proposition~\ref{prop:intersection}]
Let $\mathcal{P}=\{P_i\}_{i=0}^{n+1}\in\mathscr{P}^\eps_\s(A,B)$ and $\mathcal{Q}=\{Q_j\}_{j=0}^{m+1}\in\mathscr{P}^\eps_\s(C,D)$ be the shortest $\eps$-paths, i.e., $d^\eps_\s(A,B)=L(\mathcal P)$ and $d^\eps_\s(C,D)=L(\mathcal Q)$. We now consider the following two cases, depending on whether the sequences of line segments of $\mathcal P$ and $\mathcal Q$ intersect as subsets of $\mathbb{R}^2$. \\

\noindent\textbf{Case A.}
Let us assume that (images of) the $\eps$--paths intersect at a point $I\in\R^2$ in this case.
Consequently, there exist $0\leq i\leq n$ and $0\leq j\leq m$ such that $I$ is the point of intersection of the segments $P_iP_{i+1}$ and $Q_jQ_{j+1}$, creating four (possibly degenerate) half-segments: $P_iI, P_{i+1}I, Q_jI, Q_{j+1}I$. 

Without any loss of generality, we assume that $P_iI$ is the shortest of the four.
As a result, we note from the triangle inequality that 
\[\overline{P_iQ_j}\leq \overline{P_iI}+ \overline{Q_jI}\leq \overline{Q_{j+1}I}+ \overline{Q_jI}=\overline{Q_iQ_{j+1}}<\eps.\] 
Similarly, $\overline{P_iQ_{j+1}}<\eps$.

Set $E\coloneqq P_i$ to immediately note that $d^\eps_\s(A,B,E)= d^\eps_\s(A,B)$.
Moreover, the above implies that $\mathcal Q'\coloneqq\{C=Q_0, Q_1, \ldots, Q_{j}, P_i=E\}$ and $\mathcal Q''\coloneqq\{E=P_i, Q_{j+1}, \ldots, Q_{m}, Q_{m+1}=D\}$ are $\eps$--paths with their length at most $L(\mathcal Q)$, implying $\diam^\eps_\s(C, D, E)= d^\eps_\s(C,D)$. 

Therefore, we get $\max\{\diam^\eps_
S\{A, B, E\},\diam^\eps_\s\{C, D, E\}\}\leq\max\{d^\eps_\s(A,B),d^\eps_\s(C,D)\}$. \\

\noindent\textbf{Case B.}
In this case, we assume the $\eps$--paths don't interact.
The Jordan curve theorem \cite{Moise1977} implies, without any loss of generality, that the image of $AB$ transversely intersects $\mathcal Q$ as subsets of the plane.
Consequently, there exists $0\leq j\leq m$ such that $Q_iQ_{j+1}$ transversely intersects $AB$.
Since $\norm{Q_j-Q_{j+1}}<\eps$, we assume, without any loss of generality, that $Q_j\in AB^{\eps/2}$.
Let $q_j,q_{j+1}\in \G$ be such that $\max\{|Q_j-q_j|,|Q_{j+1}-q_{j+1}|\}<\frac{1}{2}\xi\eps$.
The triangle inequality implies that $q_j\in ab^{(1+\xi)\eps/2}$.

By the definition of $\Delta(\G)$ and the fact that $(1+\xi)\eps/2\leq(1-\Theta)^{3/2}\Delta(\G)/2$, we have
\[
\min\{d_\G(a,q_j),d_\G(b,q_{j})\}
\leq\frac{1+\Theta}{2}d_\G(a,b)+\frac{(1+\xi)\eps/2}{1-\Theta}.
\]
So, Proposition~\ref{prop:d^esp-d^L-estimate} implies that 
\[
\min\{d^\eps_\s(A,Q_j),d^\eps_\s(B,Q_{j})\}
\leq\frac{\min\{d_\G(a,q_j),d_\G(b,q_{j})\}+\xi\eps}{1-\xi}
\leq\frac{(1-\Theta^2) d_\G(a,b)+(1+3\xi-2\xi\Theta)\eps}{2(1-\xi)(1-\Theta)}.
\]
Since $\norm{Q_j-Q_{j+1}}<\eps$, we consequently have 
\begin{align*}
\min\{d^\eps_\s(A,Q_{j+1}),d^\eps_\s(B,Q_{j+1})\}
&\leq\frac{(1-\Theta^2) d_\G(a,b)+(1+3\xi-2\xi\Theta)\eps}{2(1-\xi)(1-\Theta)}+\eps\\
&\leq\frac{(1-\Theta^2) d_\G(a,b)+(3+\xi-2\Theta)\eps}{2(1-\xi)(1-\Theta)}.
\end{align*}
Set $E\coloneqq Q_j$ and $F\coloneqq Q_{j+1}$ to immediately note that $\diam^\eps_\s(C,D,E, F)=d^\eps_\s(C,D)$.
Our argument above implies 
\[
\min\{\diam^\eps_\s(A, E, F),\diam^\eps_\s(B, E, F)\}\leq\frac{(1-\Theta^2) d_\G(a,b)+(3+\xi-2\Theta)\eps}{2(1-\xi)(1-\Theta)}.
\]
This completes the proof.
\end{proof}

\end{appendices}

\bibliographystyle{plain}
\bibliography{main}

\end{document}